\documentclass[final,leqno]{siamltex}
\usepackage{booktabs}
\usepackage[noadjust]{cite}
\usepackage{amsmath}
\allowdisplaybreaks[4]
\usepackage{multirow}
\usepackage{graphicx}
\usepackage{placeins}
\usepackage{amssymb}
\usepackage{mathtools}
\usepackage{mathrsfs}
\newcommand{\norm}[1]{\left\|#1\right\|}
\usepackage{latexsym, bm}
\usepackage{hyperref}
\numberwithin{equation}{section}
\newtheorem{assumption}[theorem]{Assumption}
\newtheorem{remark}{Remark}[section]

\newcommand{\vertiii}[1]{{\left\vert\kern-0.23ex\left\vert\kern-0.23ex\left\vert #1
		\right\vert\kern-0.23ex\right\vert\kern-0.23ex\right\vert}}
\renewcommand\abstractname{Abstract}

\renewcommand\figurename{Figure}
\renewcommand\tablename{Table}

\normalsize

\renewcommand\refname{References}

\def\ba{\begin{equation}\begin{aligned}}
\def\ed{\end{aligned}\end{equation}}

\newcommand{\R}{\mathbb{R}}
\newcommand{\E}{\mathcal{E}}
\newcommand{\Lop}{\mathcal{L}}

\newcommand{\Aset}{\mathcal{A}}
\newcommand{\Ih}{\mathcal{I}_h}
\newcommand{\inner}[2]{\left\langle #1,#2\right\rangle}
\newcommand{\innerh}[2]{\left\langle #1,#2\right\rangle_h}
\newcommand{\normh}[1]{\left\|#1\right\|_h}
\newcommand{\dt}{\delta_t}
\newcommand{\du}{\delta}
\newcommand{\phiOne}{\varphi_1}
\newcommand{\Mfun}{\mathcal{M}}
\newcommand{\TIFE}{\textup{TIFE}}
\newcommand{\ETDFigureDir}{figures/learned_potential_etd_experiments}

\begin{document}
\renewcommand{\thefootnote}{\fnsymbol{footnote}}
\title{Data-Driven Free-Energy Learning from Trajectories via Structure-Preserving Exponential Time Differencing}
\author{Wenshuai Hu\thanks{Laboratory of Mathematics and Complex Systems, Ministry of Education and School of Mathematical Sciences, Beijing Normal University, Beijing 100875, China.
   }
    \and
    Guanghua Ji\footnotemark[1]
    \thanks{Corresponding author. Email: \texttt{ghji@bnu.edu.cn}}
    \and Xiao Li\footnotemark[1]
}

\maketitle
\begin{abstract}
Structure-preserving numerical methods for gradient flows have been extensively developed when the governing equations and free energies are known. However, accurately predicting dynamics from observational data while preserving intrinsic physical properties remains challenging. Although neural operators, such as Fourier neural operators (FNOs), provide powerful data-driven approximations, their predictions do not inherently guarantee physical structure preservation. To address this challenge, we integrate deep learning with structure-preserving exponential time differencing (ETD) methods. Instead of directly learning the evolution operator, we introduce a *trajectory-inferred free energy* (TIFE), which reconstructs the unknown energy density from observed trajectories through a Duhamel-based learning framework.  The learned energy determines the variational force and stabilization parameter, enabling structure-preserving numerical evolution. We further extend the framework to jointly identify the free energy and diffusion coefficient. Under suitable assumptions, we establish the maximum-bound principle, unconditional dissipation of the original learned energy, and error estimates incorporating discretization and inference errors. Numerical experiments demonstrate accurate energy recovery and reliable long-time predictions.
\end{abstract}

\begin{keywords}
trajectory-inferred free energy; data-driven gradient flow; exponential time
differencing; maximum-bound principle; energy dissipation; inference
error; diffusion-coefficient identification.
\end{keywords}

\begin{AMS}
65M06, 65M12, 65M15, 35K57.
\end{AMS}

\pagestyle{myheadings}
\thispagestyle{plain}
\markboth{W. Hu, G. Ji, and X. Li}{Trajectory-inferred free energy and ETD}
\section{Introduction}

The starting point of this work is a collection of observed spatiotemporal states rather than a fully specified evolution equation. We allow one or several trajectories,
\begin{equation}
    \mathcal D_{\rm obs}
    =\left\{
      u_{\rm obs}^{(r)}(x,t_n):
      r=1,\ldots,M,\quad n=0,\ldots,N_r
    \right\},
    \label{eq:observational-data}
\end{equation}
which are either generated by an unknown gradient-flow model
or obtained from physical experiments, with boundary conditions, and form of the differential
operator assumed to be known as $\Lop$.   The goal is to infer an effective free-energy density $F_\theta$ from the observed state transitions, and then use it to construct a structure-preserving numerical evolution scheme.
We first treat a supplied operator $\Lop$ and then consider the parameterized Laplacian family $\Lop_a=aA=-a\Delta$, in which the scalar coefficient $a=\varepsilon^2>0$ is also inferred from data.


This distinction between known physical structure and unknown
constitutive information is common in the modeling of dissipative
materials. In phase-field models, physical mechanisms such as
diffusion, interfacial regularization, and elastic interactions
often determine the form of the governing spatial operators,
with a Laplacian appearing in the simplest cases
\cite{cahn1958freeenergy,chen2002phasefield,steinbach2009phasefield}.
However, the associated coefficients and local bulk free-energy
densities may depend on material composition, temperature,
coarse-graining, and unresolved microscopic interactions
\cite{cahn1958freeenergy,flory1942thermodynamics}.
Consequently, prescribing a fixed diffusion coefficient
$a=\varepsilon^2$ and a standard quartic or logarithmic potential,
although convenient for analysis and computation, may not
accurately capture the observed dynamics.
Motivated by recent advances in data-driven identification
of thermodynamic potentials \cite{huang2022vonn},
we investigate whether the effective diffusion coefficient
and free-energy density can be jointly inferred from
state trajectories while retaining the underlying gradient-flow
structure required for reliable, structure-preserving evolution.

Physics-informed neural networks (PINNs) have emerged as a promising approach for solving partial differential equations (PDEs) by incorporating governing equations and initial-boundary conditions into neural network training \cite{raissi2019physics}. For gradient-flow equations, Mattey and Ghosh \cite{mattey2022sequential} developed a sequential training strategy for the Allen--Cahn and Cahn--Hilliard equations to improve long-time predictions. Guo et al. \cite{guo2024adaptive} further introduced an adaptive energy-based training method that exploits the energy-dissipation law to enhance the accuracy and efficiency of PINNs for gradient flows. More recently, K\"ut\"uk and Y\"ucel \cite{kutuk2025energy} incorporated energy-dissipation constraints into PINN training for Allen--Cahn equations. From a complementary perspective, Hu et al. \cite{hu2024energetic} developed energetic variational neural network discretizations with provable energy dissipation. Despite these advances, most such approaches assume that the governing equations or free-energy functionals are specified in advance, whereas recovering unknown constitutive energies directly from state observations remains a distinct challenge.

Another important direction is neural operator learning, which aims to approximate mappings between function spaces rather than individual PDE solutions. Representative architectures include DeepONet \cite{lu2021deeponet} and the Fourier neural operator (FNO) \cite{li2021fno}, which have demonstrated strong capabilities in approximating solution operators for various PDEs. Kovachki et al. \cite{kovachki2023neural} established a general neural operator framework with approximation and discretization-invariance properties. To incorporate physical knowledge, Li et al. \cite{li2024pino} proposed physics-informed neural operators (PINO), combining observational data with PDE residual constraints. Recent studies have also explored energy-aware operator learning. Zhang et al. \cite{zhang2024energy} developed an energy-dissipative evolutionary DeepONet based on the scalar auxiliary variable approach, while Tanaka et al. \cite{tanaka2025energy} introduced energy-consistent neural operators through energy-based training constraints. Nevertheless, learning an accurate evolution operator does not, in general, automatically guarantee the preservation of maximum bounds or the dissipation of the original physical energy during recurrent predictions.

Beyond direct solution and operator approximation, increasing attention has been devoted to learning the underlying thermodynamic potentials from data. Teichert et al. \cite{teichert2019machine} introduced integrable deep neural networks to reconstruct free-energy functions from derivative information, enabling data-driven constitutive modeling in phase-field simulations. Huang et al. \cite{huang2022vonn} developed variational Onsager neural networks (VONNs) to learn free-energy and dissipation potentials from spatiotemporal observations under Onsager's variational principle. They subsequently proposed statistical-physics-informed neural networks to address the nonuniqueness of thermodynamic model identification from macroscopic data \cite{huang2025statpinn}. More recently, Cohen et al. \cite{cohen2026differentiable} developed a differentiable framework for identifying free energies and kinetic laws from noisy spatiotemporal observations of pattern-forming systems. These studies demonstrate the potential of learning constitutive energy structures rather than directly approximating solution trajectories. However, the recovery of an energy functional alone does not necessarily provide a numerical evolution scheme with rigorous discrete structure-preserving properties.

Taken together, these developments highlight the growing importance of integrating machine learning with the physical structure of dissipative systems. Nevertheless, relatively few studies have simultaneously addressed the identification of unknown free energies from state-only trajectories and the construction of numerical updates that rigorously preserve the intrinsic structures of the inferred dynamics. In particular, a unified framework connecting trajectory-based energy learning, physical-parameter identification, unconditional dissipation of the original learned energy, maximum-bound preservation, and rigorous error analysis remains comparatively underdeveloped. Motivated by this gap, we combine deep learning with exponential time differencing (ETD) through a trajectory-inferred free-energy framework. Rather than directly learning the evolution operator, we identify the unknown energy and physical coefficient through the ETD response structure and subsequently construct a structure-preserving numerical evolution driven by the learned variational force.

Motivated by these considerations, we develop a data-driven,
structure-preserving framework that integrates deep learning
with exponential time differencing (ETD) methods. The central
idea is to recover the underlying constitutive energy from
observed state trajectories rather than directly learning
the full-field evolution operator. We introduce the concept
of \emph{trajectory-inferred free energy} (\TIFE), represented
by a smooth function $F_\theta$, which is identified from
state observations without requiring energy or nonlinear-force
labels. In addition to the unknown free-energy density,
the framework also allows the identification of an unknown
positive diffusion coefficient, while retaining the known
gradient-flow structure and the form of the differential
operator.

By contrast, the proposed method does not directly learn
$\Phi_{\Delta t,h}$ as an unconstrained field map.
Instead, it identifies the constitutive free-energy density
$F_\theta$ and, when unknown, the positive diffusion
coefficient $a_\alpha$, from which a structure-preserving
ETD update is constructed:
\begin{align}
    (a_\alpha,F_\theta)
    &\longrightarrow
    \bigl(\Lop_h(a_\alpha),N_\theta=-F_\theta'\bigr)
    \xrightarrow{\text{ETD}}
    \Phi_{\alpha,\theta,\tau,h}^{\TIFE\text{-ETD}},
    \label{eq:energy-to-update-factorization}\\
    \Phi_{\alpha,\theta,\tau,h}^{\TIFE\text{-ETD}}(v_h)
    &=
    e^{-\tau\Lop_{\alpha,\kappa,h}}v_h
    +\tau\phiOne(\tau\Lop_{\alpha,\kappa,h})
    \bigl[\kappa v_h-F_\theta'(v_h)\bigr],
    \label{eq:intro-learned-etd-map}
\end{align}
where $\Lop_{\alpha,\kappa,h}=a_\alpha A_h+\kappa I$.
When the linear coefficient is known, $a_\alpha A_h$
is replaced by the prescribed operator $\Lop_h$.

The proposed framework consists of two closely connected
stages: data-driven energy identification and
structure-preserving numerical evolution.
In the first stage, we exploit the variation-of-constants
formula to incorporate the ETD response structure into
the learning process. By matching the predicted and
observed state transitions, we infer the unknown
free-energy density and, when necessary, the diffusion
coefficient, without requiring energy or nonlinear-force
labels. In the second stage, the learned free energy
determines the nonlinear variational force, while its
curvature guides the selection of the stabilization
parameter. These quantities, together with the known
differential-operator structure, are incorporated into
the ETD scheme to produce physically consistent
numerical predictions.

The overall procedure is summarized as
\[
\boxed{
\begin{gathered}
\text{Observed state trajectories}\\
\Downarrow\\
\text{ETD-response-based energy learning}\\
\Downarrow\\
\text{Inferred free energy and diffusion coefficient}\\
\Downarrow\\
\text{Curvature-stabilized \TIFE--ETD evolution}.
\end{gathered}
}
\]

Unlike direct evolution-operator learning, this
construction retains the underlying gradient-flow
structure throughout both identification and evolution.
Under suitable assumptions, the resulting scheme
preserves the maximum-bound principle and unconditionally
dissipates the original learned energy without introducing
auxiliary energy variables. We further establish
conditional identification estimates and fully discrete
error bounds that quantify the effects of temporal and
spatial discretization, free-energy inference, and
diffusion-coefficient identification.
The detailed construction and theoretical analysis
are presented in the subsequent sections.

\paragraph{Main contributions.}
The main contributions of this work are threefold.
First, we propose a trajectory-based free-energy learning
framework that exploits the ETD response structure to
identify unknown constitutive energies directly from
observed state trajectories, without requiring energy or
nonlinear-force labels. The framework also accommodates
the joint identification of an unknown diffusion coefficient.
Second, we integrate the inferred free energy with a
stabilized ETD scheme, where the learned variational force
governs the numerical evolution and the energy curvature
guides the stabilization. Under suitable assumptions,
the resulting method preserves the maximum-bound principle
and unconditionally dissipates the original learned energy,
without introducing auxiliary energy variables.
Third, we establish conditional data-to-force and joint
identifiability estimates, together with fully discrete
error bounds that distinguish temporal and spatial
discretization errors from energy and coefficient
inference errors. Numerical experiments further demonstrate
the effectiveness of the proposed framework in recovering
constitutive information and producing accurate,
structure-preserving long-time predictions.

The main contributions of this work are summarized as follows:
\begin{itemize}
    \item We propose a data-driven free-energy learning framework
    based on the ETD response structure, enabling the identification
    of unknown free energies and diffusion coefficients directly
    from state trajectories without energy or force labels.

    \item We develop a stabilized ETD scheme driven by the learned
    free energy, which preserves the maximum-bound principle and
    unconditionally dissipates the original learned energy
    under suitable assumptions.

    \item We establish conditional identifiability estimates and
    fully discrete error bounds that quantify the effects of
    discretization and inference errors. Numerical experiments
    demonstrate accurate energy recovery and reliable long-time
    predictions.
\end{itemize}

\paragraph{Organization of the paper.}
The remainder of this paper is organized as follows. Section~2 introduces the
spatial discretization, the \TIFE{} parameterization, and the state-pair
training objective; it also establishes a quantitative response-to-force
relation and formulates the joint identification of the local free energy and
the scalar diffusion coefficient. Section~3 presents the curvature-stabilized
\TIFE-based ETD1 scheme. Section~4 establishes the maximum-bound principle and
the unconditional dissipation of the original \TIFE{} functional. Section~5
derives fully discrete error estimates that distinguish temporal and spatial
discretization errors from force- and coefficient-inference errors. Section~6
reports numerical experiments on potential recovery, long-time prediction,
transfer across discretizations and physical parameters, structural
preservation, and robustness with respect to data volume and observation
noise. Finally, Section~7 summarizes the main conclusions and discusses
possible extensions.

\section{Inferring the \TIFE{} from trajectory data}
\subsection{Spatial discretization}
Let $X_h$ denote a grid-function space equipped with the discrete inner product $\innerh{\cdot}{\cdot}$ and norm $\normh{\cdot}$. For a linear map $T_h:X_h\to X_h$, let
\begin{equation}
    \norm{T_h}_{h\to h}
    :=\sup_{0\ne v_h\in X_h}\frac{\normh{T_h v_h}}{\normh{v_h}}
    \label{eq:discrete-operator-norm}
\end{equation}
denote the induced operator norm. Let $\Lop_h:X_h\to X_h$ be a symmetric nonnegative approximation of $\Lop$. We define the discrete \TIFE{} functional by
\begin{equation}
    \E_{\theta,h}[v_h]
    :=\frac12\innerh{v_h}{\Lop_h v_h}
      +\innerh{F_\theta(v_h)}{1}.
    \label{eq:tife-discrete-energy}
\end{equation}
For the pointwise nonlinear estimates below, we assume that the discrete inner product is induced by a positive-weight nodal quadrature,
\begin{equation}
    \innerh{v_h}{w_h}
    =\sum_{j\in\mathcal J_h}\omega_j v_j w_j,
    \qquad \omega_j>0,
    \label{eq:positive-quadrature}
\end{equation}
and we write $\norm{v_h}_{\infty,h}:=\max_{j\in\mathcal J_h}|v_j|$. In particular, a pointwise inequality may be multiplied by $\omega_j$ and summed, and $\normh{1}^2=\sum_j\omega_j$ is assumed to be bounded independently of $h$ (it equals $|\Omega|$ for the standard periodic quadrature).
The function $F_\theta$ is applied pointwise to $v_h$.

For the maximum-bound result we impose an additional semigroup assumption.

\begin{assumption}[Positivity-preserving discrete semigroup]
\label{ass:positive-semigroup}
The discrete operator $\Lop_h$ generates a positivity-preserving $\ell_h^\infty$-contractive semigroup:
\begin{equation}
    \norm{e^{-t\Lop_h}v_h}_{\infty,h}
    \le \norm{v_h}_{\infty,h},
    \qquad t\ge0.
    \label{eq:semigroup-contractivity}
\end{equation}
\end{assumption}

For example, this assumption is satisfied by standard second-order finite-difference discretizations of the Laplacian under periodic or suitable Neumann boundary conditions. The energy-dissipation theorem itself does not require the maximum principle; it only requires that the numerical states remain in a set on which the \TIFE{} density has a bounded upper curvature.
\subsection{\TIFE{} parameterization}
Assume that the linear operator $\Lop$ is known, while the nonlinear free-energy density $F$ is unknown. We parameterize a smooth scalar function
\begin{equation}
    F_\theta:\R\to\R
\end{equation}
by a differentiable model, for example a smooth multilayer perceptron, a spline, a polynomial basis, or another $C^2$ representation. The nonlinear force is not inferred independently. Instead, it is defined by
\begin{equation}
    N_\theta(z):=-F_\theta'(z).
    \label{eq:tife-force}
\end{equation}
Hence the continuous model induced by the \TIFE{} is automatically a gradient flow,
\begin{equation}
    u_t+\Lop u=-F_\theta'(u)
    =-\frac{\delta\E_\theta}{\delta u},
    \label{eq:tife-gradient-flow}
\end{equation}
with
\begin{equation}
    \E_\theta[u]
    =\frac12\inner{u}{\Lop u}
     +\int_\Omega F_\theta(u)\,dx.
    \label{eq:tife-energy}
\end{equation}
The additive constant in $F_\theta$ is immaterial for the dynamics; a convenient gauge such as $F_\theta(0)=0$ may therefore be imposed.

\subsection{State-pair training objective}
Let
\[
    \{(U^m,U^{m+1})\}_{m=1}^{N_{\rm data}}
\]
be state pairs separated by a data sampling interval $\Delta t$. The index $m$ enumerates the available state pairs after pooling one or several observed trajectories; it does not impose a single-trajectory assumption. Define
\begin{equation}
    S_{\Delta t,h}:=e^{-\Delta t\Lop_h},
    \qquad
    B_{\Delta t,h}:=\Delta t\,\phiOne(\Delta t\Lop_h),
\end{equation}
where
\begin{equation}
    \phiOne(z)=\frac{1-e^{-z}}{z},
    \qquad \phiOne(0)=1.
\end{equation}
A one-step model prediction is
\begin{equation}
    \widehat U_\theta^{m+1}
    =S_{\Delta t,h}U^m
     -B_{\Delta t,h}F_\theta'(U^m).
    \label{eq:training-predictor}
\end{equation}
Equivalently, one may infer the Duhamel contribution
\begin{equation}
    D^m:=D_{\Delta t,{\rm data}}^m
    :=U^{m+1}-S_{\Delta t,h}U^m,
    \qquad
    D_\theta^m:=-B_{\Delta t,h}F_\theta'(U^m).
\end{equation}
A normalized training loss is then
\begin{equation}
    \mathcal J_D(\theta)
    =\frac{\sum_m\normh{D_\theta^m-D^m}^2}
    {\sum_m\normh{D^m}^2+\varepsilon_D},
    \label{eq:duhamel-loss}
\end{equation}
where $\varepsilon_D>0$ is a denominator safeguard with the units of the
summed squared response norm.
possibly augmented by a small state loss
\begin{equation}
    \mathcal J_{\rm state}(\theta)
    =\frac{1}{N_{\rm data}}\sum_m
    \normh{\widehat U_\theta^{m+1}-U^{m+1}}^2.
\end{equation}
Because
\[
    \widehat U_\theta^{m+1}-U^{m+1}=D_\theta^m-D^m,
\]
the state residual and the Duhamel residual contain the same information; a
separately normalized state term only reweights the same residual. The
training data contain neither energy labels nor nonlinear-force labels. The
\TIFE{} density is identified through the ETD1 response factorization of the
observed state evolution in~\eqref{eq:training-predictor}.

\subsection{From response-space training error to force error}
\label{sec:response-to-force}

We next make explicit when the response loss controls the instantaneous
variational force.  This step is necessarily conditional because
$B_{\Delta t,h}$ is a smoothing operator and because finite-step trajectory
data contain an ETD1 freezing defect.

Let $\overline U_h(t)$ be a noise-free solution of the semidiscrete true
model
\begin{equation}
    \frac{d}{dt}\overline U_h(t)+\Lop_h\overline U_h(t)
    =N(\overline U_h(t)),
    \qquad N(z):=-F'(z),
    \label{eq:clean-semidiscrete-data}
\end{equation}
and write
\begin{equation}
    U^m=\overline U_h(t_m)+\eta_-^m,
    \qquad
    U^{m+1}=\overline U_h(t_m+\Delta t)+\eta_+^m,
    \label{eq:observation-perturbations}
\end{equation}
for the observed input and output states.  Model discrepancy and errors in
the known linear response would enter the estimates in the same additive way
as an additional response perturbation.  Assume that the
clean and observed states take values in a compact interval
$\Aset_{\rm data}$ and set
\begin{equation}
    L_N:=\norm{F''}_{L^\infty(\Aset_{\rm data})},
    \qquad
    C_t:=\sup_t\normh{\partial_t\overline U_h(t)}.
    \label{eq:data-regularity-constants}
\end{equation}
The exact Duhamel contribution admits the decomposition
\begin{align}
    &\overline U_h(t_m+\Delta t)
      -S_{\Delta t,h}\overline U_h(t_m)
      \nonumber\\
    &\qquad
      =B_{\Delta t,h}N(\overline U_h(t_m))+q^m,
    \label{eq:duhamel-freezing-decomposition}
\end{align}
where semigroup contractivity gives
\begin{equation}
    \normh{q^m}
    \le \frac12 L_N C_t\Delta t^2.
    \label{eq:duhamel-freezing-defect}
\end{equation}

To quantify response inversion, let
\begin{equation}
    \Lambda_h:=\lambda_{\max}(\Lop_h),
    \qquad
    \beta_{\Delta t,h}
    :=\Delta t\,\phiOne(\Delta t\Lambda_h)>0.
    \label{eq:response-coercivity-constant}
\end{equation}
Since $\Lop_h$ is symmetric nonnegative, spectral calculus yields
\begin{equation}
    \beta_{\Delta t,h}\normh{v_h}
    \le \normh{B_{\Delta t,h}v_h}
    \le \Delta t\,\normh{v_h},
    \qquad v_h\in X_h.
    \label{eq:response-coercivity}
\end{equation}
Thus $\beta_{\Delta t,h}^{-1}$ is the explicit price of recovering an
instantaneous force from its propagated response.

Define the response residual and its root-mean-square value by
\begin{equation}
    r_\theta^m:=D_\theta^m-D^m,
    \qquad
    \mathcal R_D(\theta)
    :=\left(\frac1{N_{\rm data}}
      \sum_m\normh{r_\theta^m}^2\right)^{1/2}.
    \label{eq:response-rms-residual}
\end{equation}
The normalized training loss determines this quantity exactly:
\begin{equation}
    \mathcal R_D(\theta)
    =\left(
      \frac{\sum_m\normh{D^m}^2+\varepsilon_D}{N_{\rm data}}
      \mathcal J_D(\theta)
      \right)^{1/2}.
    \label{eq:normalized-loss-to-residual}
\end{equation}
We also introduce
\begin{align}
    \mathfrak N_{{\rm obs},2}
    &:={}
    \left[\frac1{N_{\rm data}}\sum_m
    \left(
      \normh{\eta_+^m}
      +(1+\Delta t L_N)\normh{\eta_-^m}
    \right)^2\right]^{1/2},
    \label{eq:observation-rms-error}\\
    \delta_{F,{\rm emp}}
    &:={}
    \left[\frac1{N_{\rm data}}\sum_m
    \normh{F_\theta'(U^m)-F'(U^m)}^2\right]^{1/2}.
    \label{eq:empirical-force-error}
\end{align}

\begin{theorem}[Response residual controls empirical force error]
\label{thm:response-to-empirical-force}
Suppose that~\eqref{eq:clean-semidiscrete-data}--\eqref{eq:data-regularity-constants}
hold and that $F,F_\theta\in C^2(\Aset_{\rm data})$. Then
\begin{equation}
    \boxed{
    \delta_{F,{\rm emp}}
    \le
    \beta_{\Delta t,h}^{-1}
    \left(
       \mathcal R_D(\theta)
       +\mathfrak N_{{\rm obs},2}
       +\frac12 L_N C_t\Delta t^2
    \right).}
    \label{eq:empirical-force-from-training}
\end{equation}
Consequently, the training loss controls the empirical force error
through~\eqref{eq:normalized-loss-to-residual}, with explicit contributions from
observation error and the finite-step freezing defect.
\end{theorem}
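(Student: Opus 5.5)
The plan is to write the force mismatch at each sample $U^m$ as a propagated response and then invert $B_{\Delta t,h}$ with the lower bound in \eqref{eq:response-coercivity}. For each $m$, since $D_\theta^m=-B_{\Delta t,h}F_\theta'(U^m)$ and $N=-F'$, I will first establish
\[
B_{\Delta t,h}\bigl(F'(U^m)-F_\theta'(U^m)\bigr)=D_\theta^m-D^m+\bigl(D^m-B_{\Delta t,h}N(U^m)\bigr)=r_\theta^m+e^m,
\]
where $e^m:=D^m-B_{\Delta t,h}N(U^m)$ collects everything not due to training.

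The core step is to bound $\normh{e^m}$. Using \eqref{eq:observation-perturbations} and \eqref{eq:duhamel-freezing-decomposition},
\[
e^m=\eta_+^m-S_{\Delta t,h}\eta_-^m+q^m+B_{\Delta t,h}\bigl(N(\overline U_h(t_m))-N(U^m)\bigr).
\]
Four bounds then apply. First, $\normh{S_{\Delta t,h}\eta_-^m}\le\normh{\eta_-^m}$, because $\Lop_h$ is symmetric nonnegative and so $S_{\Delta t,h}$ is an $\normh{\cdot}$-contraction. Second, $\normh{q^m}\le\frac12 L_NC_t\Delta t^2$ by \eqref{eq:duhamel-freezing-defect}. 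Third, the upper bound in \eqref{eq:response-coercivity} gives $\normh{B_{\Delta t,h}w}\le\Delta t\normh{w}$. Fourth, since both $\overline U_h(t_m)$ and $U^m$ take values in the interval $\Aset_{\rm data}$, the pointwise mean value theorem gives $|N(\overline U_j)-N(U_j)|\le L_N|\eta^m_{-,j}|$ at each node. Multiplying by $\omega_j$ and summing, as allowed by \eqref{eq:positive-quadrature}, yields $\normh{N(\overline U_h(t_m))-N(U^m)}\le L_N\normh{\eta_-^m}$. Combining these,
\[
\normh{e^m}\le \normh{\eta_+^m}+(1+\Delta t L_N)\normh{\eta_-^m}+\tfrac12 L_NC_t\Delta t^2 .
\]

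Next I apply coercivity: $\beta_{\Delta t,h}\normh{F_\theta'(U^m)-F'(U^m)}\le\normh{r_\theta^m}+\normh{e^m}$. I square, average over $m$, and take square roots. Minkowski's inequality in $\ell^2$ over the index $m$ then splits the RMS of the sum into the RMS of $\normh{r^m_\theta}$, which is $\mathcal R_D(\theta)$; the RMS of the observation term, which is $\mathfrak N_{{\rm obs},2}$; and the constant defect term. This gives \eqref{eq:empirical-force-from-training}. The statement about training loss follows by substituting \eqref{eq:normalized-loss-to-residual}.

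The main point needing care is the Lipschitz step for $N$. It requires that the segment between $\overline U_j$ and $U_j$ lie in $\Aset_{\rm data}$, which holds because $\Aset_{\rm data}$ is an interval containing both values. Given that, the constant is exactly $L_N=\|F''\|_{L^\infty(\Aset_{\rm data})}$.
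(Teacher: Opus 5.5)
Your proof is correct and follows the paper's argument essentially step by step. You use the same identity writing $B_{\Delta t,h}[N_\theta(U^m)-N(U^m)]$ as $r_\theta^m+q^m+\eta_+^m-S_{\Delta t,h}\eta_-^m+B_{\Delta t,h}[N(\overline U_h(t_m))-N(U^m)]$, the same contraction, Lipschitz and defect bounds, and then the lower coercivity bound followed by Minkowski in the data index; the only thing the paper does explicitly that you cite instead is the short variation-of-constants derivation of $\normh{q^m}\le\frac12 L_N C_t\Delta t^2$, in which the integrand is bounded by $L_N C_t s$ using semigroup contractivity.
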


\begin{proof}
Variation of constants in~\eqref{eq:clean-semidiscrete-data}
gives~\eqref{eq:duhamel-freezing-decomposition}, with
\[
    q^m=\int_0^{\Delta t}
    e^{-(\Delta t-s)\Lop_h}
    \bigl[N(\overline U_h(t_m+s))-N(\overline U_h(t_m))\bigr],ds.
\]
The Lipschitz continuity of $N$, semigroup contractivity, and
$\normh{\overline U_h(t_m+s)-\overline U_h(t_m)}\le C_t s$
imply~\eqref{eq:duhamel-freezing-defect}. Moreover,
\begin{align*}
 &B_{\Delta t,h}
 \bigl[N_\theta(U^m)-N(U^m)\bigr]\\
 &\quad={}
 r_\theta^m+q^m+\eta_+^m-S_{\Delta t,h}\eta_-^m
 +B_{\Delta t,h}
 \bigl[N(\overline U_h(t_m))-N(U^m)\bigr].
\end{align*}
Using $\norm{S_{\Delta t,h}}_{h\to h}\le1$,
$\norm{B_{\Delta t,h}}_{h\to h}\le\Delta t$, and the Lipschitz constant
$L_N$ gives
\begin{align*}
 \normh{B_{\Delta t,h}[N_\theta(U^m)-N(U^m)]}
 \le{}&\normh{r_\theta^m}+\frac12 L_N C_t\Delta t^2
       +\normh{\eta_+^m}\\
 &+(1+\Delta t L_N)\normh{\eta_-^m}.
\end{align*}
The lower bound in~\eqref{eq:response-coercivity}, followed by the
Minkowski inequality in the data index,
proves~\eqref{eq:empirical-force-from-training} because
$N_\theta-N=-(F_\theta'-F')$.
\end{proof}

The empirical estimate does not by itself control unseen state values.  To
state a deterministic uniform extension, let
\begin{equation}
    \mathcal Z_h:=\{U_j^m:\ 1\le m\le N_{\rm data},\ j\in\mathcal J_h\},
    \qquad
    \rho_{\rm data}:=\sup_{z\in\Aset_{\rm data}}
      \min_{\zeta\in\mathcal Z_h}|z-\zeta|,
    \label{eq:state-fill-distance}
\end{equation}
and let $\omega_{\min,h}:=\min_{j\in\mathcal J_h}\omega_j$.  Further set
\begin{align}
    \mathcal R_{D,\infty}(\theta)
    &:=\max_m\normh{r_\theta^m},
    \label{eq:max-response-residual}\\
    \mathfrak N_{{\rm obs},\infty}
    &:=\max_m\left\{
      \normh{\eta_+^m}
      +(1+\Delta t L_N)\normh{\eta_-^m}
      \right\},
    \label{eq:max-observation-error}\\
    L_g&:=\norm{F_\theta''-F''}_{L^\infty(\Aset_{\rm data})}.
    \label{eq:force-error-lipschitz}
\end{align}
In particular,
$L_g\le\norm{F_\theta''}_{L^\infty(\Aset_{\rm data})}+L_N$, so the
coverage term requires only an a priori true-force Lipschitz bound together
with the computable curvature of the learned potential.

\begin{corollary}[Uniform force error on the data-supported interval]
\label{cor:uniform-force-from-training}
Under the assumptions of Theorem~\ref{thm:response-to-empirical-force},
\begin{equation}
    \boxed{
    \norm{F_\theta'-F'}_{L^\infty(\Aset_{\rm data})}
    \le \mathfrak E_{\rm data},}
    \label{eq:uniform-force-training-bound}
\end{equation}
where
\begin{equation}
    \mathfrak E_{\rm data}
    :=\frac{
       \mathcal R_{D,\infty}(\theta)
       +\mathfrak N_{{\rm obs},\infty}
       +\frac12 L_N C_t\Delta t^2
    }{\sqrt{\omega_{\min,h}}\,\beta_{\Delta t,h}}
    +L_g\rho_{\rm data}.
    \label{eq:data-force-estimator}
\end{equation}
In particular, the residual term is controlled by the optimized loss since
\begin{equation}
    \mathcal R_{D,\infty}(\theta)
    \le
    \left[
      \left(\sum_m\normh{D^m}^2+\varepsilon_D\right)
      \mathcal J_D(\theta)
    \right]^{1/2}.
    \label{eq:max-residual-from-loss}
\end{equation}
\end{corollary}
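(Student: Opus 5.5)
The plan is to reuse the per-sample inequality from the proof of Theorem~\ref{thm:response-to-empirical-force} without averaging, then pass from the $h$-norm to nodewise values via the quadrature weights, and finally extend from sampled state values to all of $\Aset_{\rm data}$ by a Lipschitz argument.

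\textbf{Step 1: per-sample bound.} For each fixed $m$, the proof of Theorem~\ref{thm:response-to-empirical-force} already gives
\[
\normh{B_{\Delta t,h}[N_\theta(U^m)-N(U^m)]}\le \normh{r_\theta^m}+\tfrac12 L_NC_t\Delta t^2+\normh{\eta_+^m}+(1+\Delta t L_N)\normh{\eta_-^m}.
\]
Bounding each $m$-dependent term by its maximum and applying the lower bound in~\eqref{eq:response-coercivity}, I get
\[
\normh{F_\theta'(U^m)-F'(U^m)}\le \beta_{\Delta t,h}^{-1}\bigl(\mathcal R_{D,\infty}(\theta)+\mathfrak N_{{\rm obs},\infty}+\tfrac12L_NC_t\Delta t^2\bigr)
\]
uniformly in $m$.

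\textbf{Step 2: from $h$-norm to nodes.} By~\eqref{eq:positive-quadrature}, every node $j$ satisfies $\omega_{\min,h}|v_j|^2\le\omega_j|v_j|^2\le\normh{v_h}^2$. Hence $\norm{v_h}_{\infty,h}\le\omega_{\min,h}^{-1/2}\normh{v_h}$. Applied to $v_h=F_\theta'(U^m)-F'(U^m)$, which acts pointwise, this yields $|g(\zeta)|\le$ the first term of $\mathfrak E_{\rm data}$ for every $\zeta\in\mathcal Z_h$, where $g:=F_\theta'-F'$.

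\textbf{Step 3: extension to the interval.} Take any $z\in\Aset_{\rm data}$. Since $\mathcal Z_h$ is finite, there is a $\zeta\in\mathcal Z_h$ with $|z-\zeta|\le\rho_{\rm data}$. The observed states lie in $\Aset_{\rm data}$ by assumption, so $\zeta\in\Aset_{\rm data}$. Because $\Aset_{\rm data}$ is an interval, the segment between $z$ and $\zeta$ stays inside it, and the mean value theorem gives
\[
|g(z)-g(\zeta)|\le\norm{g'}_{L^\infty(\Aset_{\rm data})}|z-\zeta|\le L_g\rho_{\rm data}.
\]
Adding this to Step~2 and taking the supremum over $z$ proves~\eqref{eq:uniform-force-training-bound}.

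\textbf{Step 4: the loss bound.} For~\eqref{eq:max-residual-from-loss}, note first that $\max_m\normh{r_\theta^m}^2\le\sum_m\normh{r_\theta^m}^2$. By the definition~\eqref{eq:duhamel-loss}, the right-hand side equals $(\sum_m\normh{D^m}^2+\varepsilon_D)\mathcal J_D(\theta)$; taking square roots finishes the argument.

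I do not expect a real obstacle. The step needing the most care is the extension in Step~3. It needs $\zeta\in\Aset_{\rm data}$ and the convexity of $\Aset_{\rm data}$, both of which hold. It also needs $g'=F_\theta''-F''$ to be bounded, which follows from $C^2$ regularity on a compact set. One should also check in Step~2 that the quadrature weights used are exactly those defining $\normh{\cdot}$, so that the factor $\omega_{\min,h}^{-1/2}$ is correct.
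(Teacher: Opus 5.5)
Your proposal is correct and follows the paper's proof essentially step for step. It uses the unaveraged per-sample bound from the proof of Theorem~\ref{thm:response-to-empirical-force} together with the coercivity lower bound, the nodal conversion via $\omega_{\min,h}^{-1/2}$, the Lipschitz extension from $\mathcal Z_h$ to $\Aset_{\rm data}$ at cost $L_g\rho_{\rm data}$, and the bound $\max_m\normh{r_\theta^m}^2\le\sum_m\normh{r_\theta^m}^2$ for the loss; your explicit checks that $\zeta\in\Aset_{\rm data}$ and that the segment stays in the interval fill in details the paper leaves implicit.
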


\begin{proof}
For every observed nodal value $U_j^m$, the argument in
Theorem~\ref{thm:response-to-empirical-force}, together
with~\eqref{eq:response-coercivity} and the positive quadrature weights, gives
\begin{align*}
 |F_\theta'(U_j^m)-F'(U_j^m)|
 \le{}&\omega_{\min,h}^{-1/2}
 \normh{F_\theta'(U^m)-F'(U^m)}\\
 \le{}&
 \frac{\normh{r_\theta^m}
       +\mathfrak N_{{\rm obs},\infty}
       +\frac12 L_N C_t\Delta t^2}
      {\sqrt{\omega_{\min,h}}\,\beta_{\Delta t,h}}.
\end{align*}
For any $z\in\Aset_{\rm data}$, choose $\zeta\in\mathcal Z_h$ with
$|z-\zeta|\le\rho_{\rm data}$.  The scalar function
$F_\theta'-F'$ is $L_g$-Lipschitz, so the preceding nodal estimate plus
$L_g\rho_{\rm data}$ proves~\eqref{eq:data-force-estimator}.
Finally,~\eqref{eq:max-residual-from-loss} follows from
$\max_m\normh{r_\theta^m}\le
(\sum_m\normh{r_\theta^m}^2)^{1/2}$ and~\eqref{eq:duhamel-loss}.
\end{proof}

\begin{remark}[Conditioning and scope]
The factors in~\eqref{eq:data-force-estimator} quantify three distinct
limitations. For a
second-order elliptic discretization, $\Lambda_h=O(h^{-2})$, so inversion of
the smoothed response can amplify unresolved high-frequency errors.  The
factor $\omega_{\min,h}^{-1/2}$ is the conservative cost of converting a
discrete $L^2$ residual into a nodal maximum, while $\rho_{\rm data}$ excludes
unsupported extrapolation.  Consequently, a mesh-uniform uniform-force
estimate requires additional sampling stability or regularization; a small
mean-square training loss alone is insufficient.  For clean, sufficiently
dense data at fixed $h$,~\eqref{eq:data-force-estimator} reduces to a
quantitative response-residual and $O(\Delta t)$ force-bias estimate because
$\beta_{\Delta t,h}\sim\Delta t$ as $\Delta t\Lambda_h\to0$.
\end{remark}

\begin{remark}
The analysis does not depend on the specific inference architecture or
optimization algorithm.  The bridge estimate uses only the achieved response
residual, observation accuracy, state coverage, and smoothness of the true and
inferred forces.  The subsequent structure-preserving analysis additionally
uses the endpoint behavior and the upper curvature of $F_\theta$.
\end{remark}

\subsection{Joint inference of a scalar Laplacian coefficient and the \TIFE}
\label{sec:joint-coefficient-energy}

We now retain the known Laplacian structure but allow its scalar coefficient
to be inferred jointly with the local energy.  Let
\begin{equation}
    A_h:=-\Delta_h,
    \qquad
    \Lop_h(a):=aA_h,
    \qquad
    a_\star=\varepsilon_\star^2>0,
    \label{eq:parameterized-laplacian}
\end{equation}
where $a_\star$ denotes the true coefficient. Fix a physically admissible
interval
\begin{equation}
    \mathcal I_a:=[a_{\min},a_{\max}],
    \qquad 0\le a_{\min}<a_{\max},
    \qquad a_\star\in(a_{\min},a_{\max}),
    \label{eq:admissible-coefficient-interval}
\end{equation}
and use, for example, the bounded parameterization
\begin{equation}
    a_\alpha
    =a_{\min}+(a_{\max}-a_{\min})\sigma(\alpha),
    \qquad
    \sigma(\alpha):=\frac{1}{1+e^{-\alpha}}.
    \label{eq:positive-coefficient-parameterization}
\end{equation}
For any candidate $a\in\mathcal I_a$, define
\begin{equation}
    S_{\Delta t,h}(a):=e^{-\Delta t\Lop_h(a)},
    \qquad
    B_{\Delta t,h}(a)
    :=\Delta t\,\phiOne(\Delta t\Lop_h(a)).
    \label{eq:coefficient-dependent-response}
\end{equation}
The coefficient and energy are inferred jointly through the ETD predictor
\begin{equation}
    \widehat U_{\alpha,\theta}^{m+1}
    =S_{\Delta t,h}(a_\alpha)U^m
     +B_{\Delta t,h}(a_\alpha)N_\theta(U^m),
    \qquad N_\theta=-F_\theta',
    \label{eq:joint-etd-predictor}
\end{equation}
and the normalized loss
\begin{equation}
    \mathcal J_{a,F}(\alpha,\theta)
    :=\frac{
      \sum_m\normh{
      \widehat U_{\alpha,\theta}^{m+1}-U^{m+1}}^2}
      {\sum_m\normh{U^{m+1}}^2+\varepsilon_U}.
    \label{eq:joint-coefficient-energy-loss}
\end{equation}
Here $\varepsilon_U>0$ is a denominator safeguard with the units of the
summed squared state norm.
The corresponding root-mean-square observed residual is
\begin{equation}
    \mathcal R_{a,F}^{\rm obs}(\alpha,\theta)
    :=\left[\frac1{N_{\rm data}}\sum_m
      \normh{\widehat U_{\alpha,\theta}^{m+1}-U^{m+1}}^2
    \right]^{1/2}
    =\left[
      \frac{\sum_m\normh{U^{m+1}}^2+\varepsilon_U}{N_{\rm data}}
      \mathcal J_{a,F}(\alpha,\theta)
    \right]^{1/2}.
    \label{eq:joint-observed-residual}
\end{equation}
Thus the inferred coefficient affects both the linear response and the
propagation of the nonlinear force; it is not fitted in a separate regression
step.

Joint recovery is not automatic.  To state the required data-richness
condition, introduce the frozen-force ETD map
\begin{equation}
    \Phi_{\Delta t,h}(a,G;V)
    :=S_{\Delta t,h}(a)V+B_{\Delta t,h}(a)G(V).
    \label{eq:joint-frozen-map}
\end{equation}
Let $\overline U^m:=\overline U_h(t_m)$ denote the noise-free input states
generated with $(a_\star,N)$, where $N=-F'$. To prevent an unrestricted local
force from absorbing arbitrary changes in the coefficient on finitely many
samples, let $\mathfrak F\subset C^2(\Aset_{\rm data})$ be an admissible energy
class containing the true density $F$ and all candidate densities
$F_\theta$ under consideration, and define
\begin{equation}
    \mathfrak G
    :=\{-\widetilde F':\ \widetilde F\in\mathfrak F\}.
    \label{eq:admissible-force-class}
\end{equation}
Assume that every $G\in\mathfrak G$ is Lipschitz continuous on
$\Aset_{\rm data}$ with a common constant $L_{\mathfrak G}$. The class may be
specified through the architecture, regularization, or explicit smoothness and
complexity constraints. Finally, let $c_a>0$ be a fixed scaling constant that
nondimensionalizes the coefficient error.

\begin{assumption}[Joint response identifiability]
\label{ass:joint-identifiability}
There exists $C_{\rm id}>0$ such that, for every
$(a,G)\in\mathcal I_a\times\mathfrak G$,
\begin{align}
 &\left[
 c_a^2|a-a_\star|^2
 +\frac1{N_{\rm data}}\sum_m
   \normh{G(\overline U^m)-N(\overline U^m)}^2
 \right]^{1/2}
 \nonumber\\
 &\qquad\le
 C_{\rm id}
 \left[
 \frac1{N_{\rm data}}\sum_m
 \normh{
 \Phi_{\Delta t,h}(a,G;\overline U^m)
 -\Phi_{\Delta t,h}(a_\star,N;\overline U^m)}^2
 \right]^{1/2}.
 \label{eq:joint-identifiability}
\end{align}
\end{assumption}

\begin{theorem}[Joint response error controls coefficient and force errors]
\label{thm:joint-response-error}
Suppose that Assumption~\ref{ass:joint-identifiability} holds,
$(a_\alpha,N_\theta)\in\mathcal I_a\times\mathfrak G$, the clean semidiscrete
trajectory remains in $\Aset_{\rm data}$, and $L_N$ and $C_t$ are defined as
in~\eqref{eq:data-regularity-constants}. Set
\begin{equation}
    \mathcal R_{a,F}^{\rm clean}(\alpha,\theta)
    :=\left[
    \frac1{N_{\rm data}}\sum_m
    \normh{
    \Phi_{\Delta t,h}(a_\alpha,N_\theta;\overline U^m)
    -\overline U^{m+1}}^2
    \right]^{1/2}.
    \label{eq:joint-clean-residual}
\end{equation}
Then
\begin{align}
 &\left[
 c_a^2|a_\alpha-a_\star|^2
 +\frac1{N_{\rm data}}\sum_m
 \normh{F_\theta'(\overline U^m)-F'(\overline U^m)}^2
 \right]^{1/2}
 \nonumber\\
 &\qquad\le
 C_{\rm id}\left(
 \mathcal R_{a,F}^{\rm clean}(\alpha,\theta)
 +\frac12 L_N C_t\Delta t^2
 \right).
 \label{eq:joint-training-to-parameters}
\end{align}
\end{theorem}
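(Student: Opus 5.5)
The plan is to apply Assumption~\ref{ass:joint-identifiability} with $(a,G)=(a_\alpha,N_\theta)$, which is admissible by hypothesis. Since $N_\theta-N=-(F_\theta'-F')$, the left-hand side of \eqref{eq:joint-identifiability} is exactly the left-hand side of \eqref{eq:joint-training-to-parameters}. The whole task therefore reduces to bounding the root-mean-square quantity
\[
\left[\frac1{N_{\rm data}}\sum_m\normh{\Phi_{\Delta t,h}(a_\alpha,N_\theta;\overline U^m)-\Phi_{\Delta t,h}(a_\star,N;\overline U^m)}^2\right]^{1/2}
\]
by $\mathcal R_{a,F}^{\rm clean}(\alpha,\theta)+\frac12L_NC_t\Delta t^2$.

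For each $m$, I insert the clean next state $\overline U^{m+1}=\overline U_h(t_m+\Delta t)$ and split
\[
\Phi(a_\alpha,N_\theta;\overline U^m)-\Phi(a_\star,N;\overline U^m)
=\bigl[\Phi(a_\alpha,N_\theta;\overline U^m)-\overline U^{m+1}\bigr]
+\bigl[\overline U^{m+1}-\Phi(a_\star,N;\overline U^m)\bigr].
\]
By the variation-of-constants decomposition \eqref{eq:duhamel-freezing-decomposition}, applied with the true operator $\Lop_h(a_\star)$, the second bracket equals the freezing defect $q^m$. Estimate \eqref{eq:duhamel-freezing-defect} then gives $\normh{q^m}\le\frac12L_NC_t\Delta t^2$. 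The proof of Theorem~\ref{thm:response-to-empirical-force} supplies the ingredients: contractivity of $e^{-s\Lop_h(a_\star)}$ in $\normh{\cdot}$, which holds because $a_\star A_h$ is symmetric nonnegative; the Lipschitz bound $L_N$ on $\Aset_{\rm data}$, which is usable since the clean trajectory stays there; and $\normh{\overline U_h(t_m+s)-\overline U_h(t_m)}\le C_ts$.

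Next I take the root mean square over $m$ and apply Minkowski's inequality in $\ell^2$ over the data index. The first brackets contribute exactly $\mathcal R_{a,F}^{\rm clean}$ by \eqref{eq:joint-clean-residual}. The second brackets contribute at most $\frac12L_NC_t\Delta t^2$, because a uniform bound passes through the root-mean-square average. Multiplying by $C_{\rm id}$ then gives \eqref{eq:joint-training-to-parameters}.

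The only point needing care is that $C_t$ and $L_N$ must be understood for the trajectory generated by $(a_\star,N)$, and that the defect bound is used with the coefficient $a_\star$ rather than $a_\alpha$. Once the true propagator is placed in the second bracket, no comparison of $S_{\Delta t,h}(a_\alpha)$ with $S_{\Delta t,h}(a_\star)$ is required, so no further difficulty arises.
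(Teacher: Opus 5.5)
Your proposal is correct and follows the paper's proof essentially verbatim. You apply Assumption~\ref{ass:joint-identifiability} with $(a,G)=(a_\alpha,N_\theta)$, identify $\overline U^{m+1}-\Phi_{\Delta t,h}(a_\star,N;\overline U^m)$ with the freezing defect $q^m$ bounded as in \eqref{eq:duhamel-freezing-defect} (taken with the true operator $a_\star A_h$), and combine the two pieces by the triangle (Minkowski) inequality in the data index.
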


\begin{proof}
Variation of constants for the true semidiscrete equation gives
\begin{equation*}
    \overline U^{m+1}
    =\Phi_{\Delta t,h}
      (a_\star,N;\overline U^m)+q^m,
    \qquad
    \normh{q^m}\le\frac12 L_N C_t\Delta t^2,
\end{equation*}
exactly as in~\eqref{eq:duhamel-freezing-defect}. The triangle inequality in
the data index bounds the response difference on the right-hand side
of~\eqref{eq:joint-identifiability} by
$\mathcal R_{a,F}^{\rm clean}+\frac12 L_N C_t\Delta t^2$. Taking
$a=a_\alpha$ and $G=N_\theta$, and using
$N_\theta-N=-(F_\theta'-F')$,
proves~\eqref{eq:joint-training-to-parameters}.
\end{proof}

For the perturbed observations in~\eqref{eq:observation-perturbations}, the
uniform force-class Lipschitz bound gives
\begin{equation}
    \mathcal R_{a,F}^{\rm clean}(\alpha,\theta)
    \le \mathcal R_{a,F}^{\rm obs}(\alpha,\theta)
    +\mathfrak N_{{\rm joint},2},
    \label{eq:joint-clean-to-observed}
\end{equation}
where
\begin{equation}
    \mathfrak N_{{\rm joint},2}
    :=\left[\frac1{N_{\rm data}}\sum_m
      \left(
        (1+\Delta t L_{\mathfrak G})\normh{\eta_-^m}
        +\normh{\eta_+^m}
      \right)^2
    \right]^{1/2}.
    \label{eq:joint-observation-error}
\end{equation}
Indeed, $\Phi_{\Delta t,h}(a,G;\cdot)$ has Lipschitz constant at most
$1+\Delta t L_{\mathfrak G}$ in $\normh{\cdot}$. Consequently,
Theorem~\ref{thm:joint-response-error} also holds with
$\mathcal R_{a,F}^{\rm clean}(\alpha,\theta)$ replaced by
$\mathcal R_{a,F}^{\rm obs}(\alpha,\theta)+\mathfrak N_{{\rm joint},2}$.

\begin{remark}[Meaning of joint identifiability]
Assumption~\ref{ass:joint-identifiability} requires spatially rich data. The
coefficient multiplies $A_h U$, whereas the local force depends only on the
state value. Observations in which comparable state values occur with
different spatial curvatures help separate the two effects. If instead
$A_h U^m$ is approximately a single-valued function of $U^m$ throughout the
data, a change in $a$ can be absorbed into the inferred local force and the
assumption may fail. Multiple initial conditions and a mixture of low- and
high-frequency modes are therefore more important than the raw number of
nearly redundant snapshots. The condition is a restricted stability
assumption on $\mathcal I_a\times\mathfrak G$, not a generic identifiability
statement for arbitrary functions. The time scale and mobility are kept
fixed and known; if an unknown mobility is introduced simultaneously, only
products of the mobility with $a$ and $F'$ are generally identifiable without
an additional normalization.
\end{remark}

For an inferred pair $(a_\alpha,F_\theta)$, the corresponding discrete energy is
\begin{equation}
    \E_{\alpha,\theta,h}[v_h]
    :=\frac12\innerh{v_h}{\Lop_h(a_\alpha)v_h}
      +\innerh{F_\theta(v_h)}{1}.
    \label{eq:joint-learned-energy}
\end{equation}
If $A_h=-\Delta_h$ satisfies the usual summation-by-parts identity, define the
discrete gradient seminorm by
\begin{equation}
    \norm{\nabla_h v_h}_{2,h}^2:=\innerh{v_h}{A_h v_h}.
    \label{eq:discrete-gradient-seminorm}
\end{equation}
Then
\begin{equation}
    \E_{\alpha,\theta,h}[v_h]
    =\frac{a_\alpha}{2}\norm{\nabla_h v_h}_{2,h}^2
      +\innerh{F_\theta(v_h)}{1}.
    \label{eq:joint-learned-energy-gradient}
\end{equation}

\section{Curvature-informed \TIFE-based ETD scheme}

\subsection{Stabilized splitting}
Let $\Aset\subset\R$ be a convex admissible interval. In the maximum-bound setting we take
\begin{equation}
    \Aset=[-\beta,\beta],
    \qquad \beta>0.
\end{equation}
Given the \TIFE{} density, define
\begin{equation}
    L_{\theta,\Aset}^+
    :=\sup_{z\in\Aset}F_\theta''(z).
    \label{eq:curvature-bound}
\end{equation}
Choose
\begin{equation}
    \kappa\ge L_{\theta,\Aset}^+.
    \label{eq:kappa-condition}
\end{equation}
Throughout the analysis we make the nonnegative choice
\begin{equation}
    \kappa\ge \max\{0,L_{\theta,\Aset}^+\}.
    \label{eq:nonnegative-kappa}
\end{equation}
This supplementary condition guarantees that $\Lop_{\kappa,h}=\Lop_h+\kappa I$ is nonnegative and that $e^{-t\Lop_{\kappa,h}}=e^{-\kappa t}e^{-t\Lop_h}$ is contractive. It is also needed in the maximum-bound argument, where $\kappa\beta$ is used as a nonnegative bound.
The parameter $\kappa$ is fixed independently of the discretization parameters $h$ and $\tau$; for a family of \TIFE{} models, a uniform error constant requires a uniform admissible bound for $\kappa$.
In computations one may use a dense sampling or a certified upper bound and add a small safety margin.
A value obtained only by dense sampling is an empirical candidate for $\kappa$; the theorems require an actual upper bound on the whole admissible interval. Such a bound may, for example, be certified by interval arithmetic, an analytic estimate for the chosen representation, or a verified global maximization procedure.

Introduce
\begin{equation}
    \Lop_{\kappa,h}:=\Lop_h+\kappa I,
    \qquad
    \Lop_\kappa:=\Lop+\kappa I,
    \qquad
    H_{\kappa,\theta}(z):=\kappa z-F_\theta'(z).
    \label{eq:stabilized-objects}
\end{equation}
The \TIFE-induced gradient flow is unchanged, since
\begin{equation}
    u_t+\Lop_{\kappa}u=H_{\kappa,\theta}(u).
\end{equation}

\subsection{\TIFE-based ETD1 scheme}
The proposed stabilized \TIFE-based ETD1 scheme is
\begin{equation}
    \boxed{
    u_h^{n+1}
    =e^{-\tau\Lop_{\kappa,h}}u_h^n
    +\tau\phiOne(\tau\Lop_{\kappa,h})
      H_{\kappa,\theta}(u_h^n).
    }
    \label{eq:etd1}
\end{equation}
For the jointly inferred Laplacian coefficient, the same update is used with
\begin{equation}
    \Lop_h=\Lop_h(a_\alpha)=a_\alpha A_h,
    \qquad
    \Lop_{\alpha,\kappa,h}:=a_\alpha A_h+\kappa I,
    \label{eq:joint-deployment-operator}
\end{equation}
so that
\begin{equation}
    u_h^{n+1}
    =e^{-\tau\Lop_{\alpha,\kappa,h}}u_h^n
    +\tau\phiOne(\tau\Lop_{\alpha,\kappa,h})
      \bigl[\kappa u_h^n-F_\theta'(u_h^n)\bigr].
    \label{eq:joint-coefficient-etd1}
\end{equation}
For later use, define
\begin{equation}
    \Mfun(z):=\frac{z}{e^z-1},
    \qquad \Mfun(0)=1.
    \label{eq:M-function}
\end{equation}

\begin{lemma}[Coercive reformulation]
\label{lem:coercive-reformulation}
Scheme~\eqref{eq:etd1} is equivalent to
\begin{equation}
    \Mfun(\tau\Lop_{\kappa,h})\dt u_h^{n+1}
    +\Lop_{\kappa,h}u_h^{n+1}
    =H_{\kappa,\theta}(u_h^n),
    \label{eq:coercive-form}
\end{equation}
where the operator-induced seminorm is defined by
\[
    \dt u_h^{n+1}
    :=\frac{u_h^{n+1}-u_h^n}{\tau}.
\]
Moreover, $\Mfun(\tau\Lop_{\kappa,h})$ is symmetric positive definite.
\end{lemma}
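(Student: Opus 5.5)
The plan is to work entirely in the spectral calculus of the symmetric nonnegative operator $\Lop_{\kappa,h}$. By \eqref{eq:nonnegative-kappa}, $\kappa\ge0$, so $\Lop_{\kappa,h}=\Lop_h+\kappa I$ is symmetric with spectrum in $[0,\infty)$. It therefore admits an $\innerh{\cdot}{\cdot}$-orthonormal eigenbasis, and every function of it ($e^{-\tau\Lop_{\kappa,h}}$, $\phiOne(\tau\Lop_{\kappa,h})$, $\Mfun(\tau\Lop_{\kappa,h})$) is diagonal in the same basis. These operators commute, and scalar identities among $e^{-z}$, $\phiOne$ and $\Mfun$ lift to operator identities.

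The key scalar identity to verify is
\[
\Mfun(z)\,\frac{1-e^{-z}}{z}=\frac{z}{e^z-1}\cdot\frac{1-e^{-z}}{z}=e^{-z},
\qquad\text{equivalently}\qquad z\,\phiOne(z)\Mfun(z)^{-1}... \text{(rearranged below)}.
\]
More usefully, I will check that $\Mfun(z)+z=\dfrac{z}{1-e^{-z}}=\phiOne(z)^{-1}$ for $z>0$, with the value $1$ at $z=0$ on both sides. This follows from $\dfrac{z}{e^z-1}+z=\dfrac{ze^z}{e^z-1}=\dfrac{z}{1-e^{-z}}$. Also $\Mfun(z)\phiOne(z)=e^{-z}$.

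Next I rewrite \eqref{eq:etd1}. Set $\mathcal B:=\tau\phiOne(\tau\Lop_{\kappa,h})$. Since $\phiOne>0$ on $[0,\infty)$, $\mathcal B$ is invertible and $\mathcal B^{-1}=\tau^{-1}\bigl(\Mfun(\tau\Lop_{\kappa,h})+\tau\Lop_{\kappa,h}\bigr)$. Applying $\mathcal B^{-1}$ to \eqref{eq:etd1}, the term $\mathcal B^{-1}e^{-\tau\Lop_{\kappa,h}}$ equals $\tau^{-1}\Mfun(\tau\Lop_{\kappa,h})$, by the identity $e^{-z}/\phiOne(z)=\Mfun(z)$. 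This gives
\[
\tau^{-1}\bigl(\Mfun(\tau\Lop_{\kappa,h})+\tau\Lop_{\kappa,h}\bigr)u_h^{n+1}-\tau^{-1}\Mfun(\tau\Lop_{\kappa,h})u_h^n=H_{\kappa,\theta}(u_h^n),
\]
which is exactly \eqref{eq:coercive-form}. Every step is reversible because $\mathcal B$ is invertible, so the two formulations are equivalent.

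Finally, $\Mfun(\tau\Lop_{\kappa,h})$ is symmetric as a real function of a symmetric operator. Its eigenvalues are $\Mfun(\tau\lambda)$ with $\lambda\ge0$. The function $\Mfun(z)=z/(e^z-1)$ is strictly positive on $[0,\infty)$, with $\Mfun(0)=1$ and $\Mfun$ decreasing toward $0$ but never reaching it. Hence $\Mfun(\tau\Lop_{\kappa,h})$ is positive definite. Its lower bound $\Mfun(\tau\lambda_{\max})>0$ is mesh-dependent, but the lemma needs only definiteness.

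There is no serious obstacle. The only point requiring care is the removable singularity at $z=0$, where $\phiOne(0)=\Mfun(0)=1$; it must be checked so that the scalar identities, and hence the invertibility of $\mathcal B$, hold on the whole spectrum, including the case $\kappa=0$ with $\Lop_h$ singular.
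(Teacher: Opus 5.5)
Your proof is correct and is essentially the paper's argument. Both work in the spectral calculus of $\Lop_{\kappa,h}$, use $\phiOne>0$ to invert $\tau\phiOne(\tau\Lop_{\kappa,h})$, and rest on the scalar identity $\phiOne(z)^{-1}-z=\Mfun(z)$ together with $\Mfun>0$ on $[0,\infty)$; the only difference is bookkeeping (you apply the inverse to the whole update and use $e^{-z}/\phiOne(z)=\Mfun(z)$, whereas the paper first isolates the increment and then substitutes $u_h^n=u_h^{n+1}-\tau\dt u_h^{n+1}$), and you should delete the unfinished ``equivalently'' fragment in your first display.
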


\begin{proof}
Let $L=\Lop_{\kappa,h}$ and $S=e^{-\tau L}$. Since
\[
    \tau\phiOne(\tau L)
    =(I-S)L^{-1},
\]
the displayed identity is understood through spectral functional calculus when $L$ has a zero eigenvalue. More explicitly, on an eigenvector of $L$ with eigenvalue $\lambda\ge0$, both sides act by
\[
    \frac{1-e^{-\tau\lambda}}{\lambda},
\]
with the quotient continuously extended to the value $\tau$ at $\lambda=0$; thus no ordinary inverse of a singular $L$ is required. Equivalently, the identity
\[
    I-e^{-\tau L}=\tau L\phiOne(\tau L)
\]
can be used throughout the calculation. Moreover, $\phiOne(\tau\lambda)>0$ for every $\lambda\ge0$, so $\phiOne(\tau L)$ is invertible even when $L$ is only nonnegative.
Equation~\eqref{eq:etd1} implies
\[
    \dt u_h^{n+1}
    =\phiOne(\tau L)\bigl(H_{\kappa,\theta}(u_h^n)-Lu_h^n\bigr).
\]
Hence
\[
    \phiOne(\tau L)^{-1}\dt u_h^{n+1}+Lu_h^n
    =H_{\kappa,\theta}(u_h^n).
\]
Using $u_h^n=u_h^{n+1}-\tau\dt u_h^{n+1}$ gives
\[
    \left[\phiOne(\tau L)^{-1}-\tau L\right]\dt u_h^{n+1}
    +Lu_h^{n+1}
    =H_{\kappa,\theta}(u_h^n).
\]
For a scalar $z\ge0$,
\[
    \phiOne(z)^{-1}-z
    =\frac{z}{e^z-1}=\Mfun(z).
\]
The spectral theorem therefore yields~\eqref{eq:coercive-form}. Since
$\Mfun(z)>0$ for $z\ge0$, the operator $\Mfun(\tau L)$ is positive definite.
\end{proof}

\section{Maximum-bound principle and unconditional energy stability}

We next show that the \TIFE{} itself provides the information required to preserve an admissible interval.

\begin{assumption}[Inward endpoint condition]
\label{ass:endpoint}
The \TIFE{} density satisfies
\begin{equation}
    F_\theta'(-\beta)\le0,
    \qquad
    F_\theta'(\beta)\ge0.
    \label{eq:endpoint-condition}
\end{equation}
\end{assumption}

\begin{lemma}[Bounded stabilized nonlinearity]
\label{lem:bounded-nonlinearity}
Suppose $F_\theta\in C^2([-\beta,\beta])$,
condition~\eqref{eq:kappa-condition} holds with $\Aset=[-\beta,\beta]$, and
Assumption~\ref{ass:endpoint} is satisfied. Then
\begin{equation}
    |H_{\kappa,\theta}(z)|\le\kappa\beta,
    \qquad |z|\le\beta.
    \label{eq:H-bound}
\end{equation}
\end{lemma}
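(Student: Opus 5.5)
The argument is the standard one for stabilized Allen--Cahn-type nonlinearities. It uses only monotonicity of $H_{\kappa,\theta}$ on $[-\beta,\beta]$ together with the endpoint signs.

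\textbf{Step 1 (monotonicity).} Since $F_\theta\in C^2([-\beta,\beta])$, the function $H_{\kappa,\theta}$ is $C^1$ there, with
\[
H_{\kappa,\theta}'(z)=\kappa-F_\theta''(z)\ge \kappa-L_{\theta,\Aset}^+\ge0
\]
by \eqref{eq:curvature-bound} and \eqref{eq:kappa-condition}. Hence $H_{\kappa,\theta}$ is nondecreasing on $[-\beta,\beta]$, and for $|z|\le\beta$
\[
H_{\kappa,\theta}(-\beta)\le H_{\kappa,\theta}(z)\le H_{\kappa,\theta}(\beta).
\]

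\textbf{Step 2 (endpoint values).} Assumption~\ref{ass:endpoint} gives
\[
H_{\kappa,\theta}(\beta)=\kappa\beta-F_\theta'(\beta)\le\kappa\beta,
\qquad
H_{\kappa,\theta}(-\beta)=-\kappa\beta-F_\theta'(-\beta)\ge-\kappa\beta.
\]
Combining this with Step 1 yields $-\kappa\beta\le H_{\kappa,\theta}(z)\le\kappa\beta$, which is \eqref{eq:H-bound}.

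There is no real obstacle in this argument. The only subtlety is that the lemma assumes only \eqref{eq:kappa-condition}, not \eqref{eq:nonnegative-kappa}. If $\kappa<0$, the two-sided bound $-\kappa\beta\le H\le\kappa\beta$ would describe an empty range, so $|H|\le\kappa\beta$ cannot hold. In that case one of the endpoint inequalities is violated by Step 1, which means the hypotheses are consistent only when $\kappa\ge0$. In any event, the analysis adopts \eqref{eq:nonnegative-kappa} throughout, so I would note that $\kappa\ge0$ is in force and the bound reads $|H_{\kappa,\theta}(z)|\le\kappa\beta$ as stated.
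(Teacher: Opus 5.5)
Your proof is correct and follows the paper's argument exactly: $H_{\kappa,\theta}'=\kappa-F_\theta''\ge0$ makes $H_{\kappa,\theta}$ nondecreasing on $[-\beta,\beta]$, and the inward endpoint condition gives $H_{\kappa,\theta}(\beta)\le\kappa\beta$ and $H_{\kappa,\theta}(-\beta)\ge-\kappa\beta$. Your side remark is also right: the endpoint signs give $\int_{-\beta}^{\beta}F_\theta''\,dz=F_\theta'(\beta)-F_\theta'(-\beta)\ge0$, so the hypotheses already force $\kappa\ge0$, and the nonnegativity of $\kappa$ is automatic here.
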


\begin{proof}
Since
\[
    H_{\kappa,\theta}'(z)
    =\kappa-F_\theta''(z)\ge0,
\]
$H_{\kappa,\theta}$ is nondecreasing on $[-\beta,\beta]$. Hence
\[
    H_{\kappa,\theta}(z)
    \le H_{\kappa,\theta}(\beta)
    =\kappa\beta-F_\theta'(\beta)
    \le\kappa\beta,
\]
and similarly
\[
    H_{\kappa,\theta}(z)
    \ge H_{\kappa,\theta}(-\beta)
    =-\kappa\beta-F_\theta'(-\beta)
    \ge-\kappa\beta.
\]
This proves~\eqref{eq:H-bound}.
\end{proof}

\begin{theorem}[Unconditional maximum-bound principle]
\label{thm:mbp}
Assume Assumption~\ref{ass:positive-semigroup}, Assumption~\ref{ass:endpoint}, and
\[
    \kappa\ge\sup_{|z|\le\beta}F_\theta''(z).
\]
If
\[
    \norm{u_h^0}_{\infty,h}\le\beta,
\]
then the \TIFE-based ETD scheme~\eqref{eq:etd1} satisfies
\begin{equation}
    \boxed{
    \norm{u_h^n}_{\infty,h}\le\beta,
    \qquad n\ge0,
    }
    \label{eq:mbp-result}
\end{equation}
for every $\tau>0$.
\end{theorem}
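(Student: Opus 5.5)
The plan is induction on $n$: assuming $\norm{u_h^n}_{\infty,h}\le\beta$, show $\norm{u_h^{n+1}}_{\infty,h}\le\beta$. By Lemma~\ref{lem:bounded-nonlinearity}, applied nodewise, the induction hypothesis gives $\norm{H_{\kappa,\theta}(u_h^n)}_{\infty,h}\le\kappa\beta$. Throughout we use the nonnegative choice~\eqref{eq:nonnegative-kappa}, so that $\kappa\beta\ge0$ is a legitimate bound and $\Lop_{\kappa,h}$ is nonnegative.

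The key step is to write the $\phiOne$-term as a time integral of the semigroup, so that Assumption~\ref{ass:positive-semigroup} controls both terms at once. Since $\tau\phiOne(\tau L)=\int_0^\tau e^{-sL}\,ds$ for $L=\Lop_{\kappa,h}$ (checked eigenvalue by eigenvalue, including $\lambda=0$), we have $e^{-sL}=e^{-\kappa s}e^{-s\Lop_h}$. Together with~\eqref{eq:semigroup-contractivity} this gives
\[
\norm{u_h^{n+1}}_{\infty,h}\le e^{-\kappa\tau}\norm{u_h^n}_{\infty,h}+\int_0^\tau e^{-\kappa s}\,ds\,\norm{H_{\kappa,\theta}(u_h^n)}_{\infty,h}
\le e^{-\kappa\tau}\beta+(1-e^{-\kappa\tau})\beta=\beta .
\]
Here the triangle inequality in $\norm{\cdot}_{\infty,h}$ is passed inside the integral; this is valid because the integral is a finite-dimensional Bochner (Riemann) integral and $\norm{\cdot}_{\infty,h}$ is a norm. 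When $\kappa=0$ the integral factor becomes $\tau$, but then $\norm{H_{\kappa,\theta}(u_h^n)}_{\infty,h}\le0$, so the bound still reads $\le\beta$. No restriction on $\tau$ appears, which gives unconditional validity.

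The main point requiring care is the integral representation and the use of contractivity. Assumption~\ref{ass:positive-semigroup} is only stated for $e^{-t\Lop_h}$, so we factor out $e^{-\kappa s}$ using that $\kappa I$ commutes with $\Lop_h$. The positivity-preserving part of the assumption is not even needed beyond the $\ell^\infty$ contraction. The endpoint Assumption~\ref{ass:endpoint} and the curvature condition on $\kappa$ enter only through Lemma~\ref{lem:bounded-nonlinearity}. That lemma needs the monotonicity of $H_{\kappa,\theta}$ on $[-\beta,\beta]$, which is why the induction hypothesis must place every nodal value in $\Aset$ before the nonlinearity is evaluated.
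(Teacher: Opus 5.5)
Your proof is correct and follows the paper's argument essentially step for step. Like the paper, you argue by induction, use $e^{-t\Lop_{\kappa,h}}=e^{-\kappa t}e^{-t\Lop_h}$ with $\ell^\infty_h$-contractivity, write $\tau\phiOne(\tau\Lop_{\kappa,h})$ as a semigroup integral (your $e^{-sL}$ is the paper's $e^{-(\tau-s)L}$ after substitution), and invoke Lemma~\ref{lem:bounded-nonlinearity} to obtain $e^{-\kappa\tau}\beta+(1-e^{-\kappa\tau})\beta=\beta$. Your remarks that positivity preservation is never actually used and that $\kappa=0$ is harmless are both accurate. In fact $\kappa\ge0$ is automatic here: the endpoint condition gives $\int_{-\beta}^{\beta}F_\theta''\,dz=F_\theta'(\beta)-F_\theta'(-\beta)\ge0$.
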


\begin{proof}
The assertion is true for $n=0$ by hypothesis.
Assume inductively that $\norm{u_h^n}_{\infty,h}\le\beta$. By Assumption~\ref{ass:positive-semigroup},
\[
    \norm{e^{-\tau\Lop_{\kappa,h}}u_h^n}_{\infty,h}
    =e^{-\kappa\tau}
      \norm{e^{-\tau\Lop_h}u_h^n}_{\infty,h}
    \le e^{-\kappa\tau}\beta.
\]
Moreover,
\begin{align*}
    \tau\phiOne(\tau\Lop_{\kappa,h})
    &=\int_0^\tau e^{-(\tau-s)\Lop_{\kappa,h}}\,ds,
\end{align*}
and the integrand is positivity preserving. For $0\le s\le\tau$,
Assumption~\ref{ass:positive-semigroup} and~\eqref{eq:nonnegative-kappa} give
\[
    \norm{e^{-(\tau-s)\Lop_{\kappa,h}}v_h}_{\infty,h}
    \le e^{-\kappa(\tau-s)}\norm{v_h}_{\infty,h}.
\]
This makes the semigroup estimate used in the next line explicit. Therefore, using Lemma~\ref{lem:bounded-nonlinearity},
\begin{align*}
    \left\|
    \tau\phiOne(\tau\Lop_{\kappa,h})
    H_{\kappa,\theta}(u_h^n)
    \right\|_{\infty,h}
    &\le
    \int_0^\tau
    e^{-\kappa(\tau-s)}\,ds\;\kappa\beta
    \\
    &=(1-e^{-\kappa\tau})\beta.
\end{align*}
The last identity is understood by continuity when $\kappa=0$, in which case both sides vanish.
Thus
\[
    \norm{u_h^{n+1}}_{\infty,h}
    \le e^{-\kappa\tau}\beta
    +(1-e^{-\kappa\tau})\beta
    =\beta.
\]
The induction is complete.
\end{proof}

\begin{remark}[Strict separation]
For a singular potential such as the Flory--Huggins energy, the same argument applies on a strictly separated interval
\[
    \Aset_\delta=[-1+\delta,1-\delta],
    \qquad \delta>0,
\]
provided the corresponding inward endpoint condition and curvature bound hold on $\Aset_\delta$.
\end{remark}

\subsection{Unconditional dissipation of the original \TIFE}
\label{sec:energy-dissipation}
Define tfdhe shifted nonlinear energy density
\begin{equation}
    G_{\kappa,\theta}(z)
    :=F_\theta(z)-\frac{\kappa}{2}z^2.
    \label{eq:G-def}
\end{equation}
Then
\begin{equation}
    G_{\kappa,\theta}''(z)
    =F_\theta''(z)-\kappa\le0
    \qquad z\in\Aset.
    \label{eq:G-concave}
\end{equation}
Hence $G_{\kappa,\theta}$ is concave on the admissible set. The \TIFE{} functional admits the exact decomposition
\begin{equation}
    \E_{\theta,h}[v_h]
    =\frac12\innerh{v_h}{\Lop_{\kappa,h}v_h}
     +\innerh{G_{\kappa,\theta}(v_h)}{1}.
    \label{eq:shifted-energy-decomposition}
\end{equation}

\begin{theorem}[Unconditional dissipation of the original \TIFE]
\label{thm:energy}
Let $\Aset\subset\R$ be a convex interval containing $u_h^n$ and $u_h^{n+1}$ pointwise, and assume
\begin{equation}
    F_\theta''(z)\le\kappa,
    \qquad z\in\Aset.
    \label{eq:semiconcavity-condition}
\end{equation}
Then the \TIFE-based ETD scheme~\eqref{eq:etd1} satisfies
\begin{align}
    \E_{\theta,h}[u_h^{n+1}]
    -\E_{\theta,h}[u_h^n]
    \le{}&
    -\tau\innerh{
    \Mfun(\tau\Lop_{\kappa,h})\dt u_h^{n+1}}
    {\dt u_h^{n+1}}
    \nonumber\\
    &-\frac12
    \norm{u_h^{n+1}-u_h^n}_{\Lop_{\kappa,h},h}^2
    \le0,
    \label{eq:energy-dissipation}
\end{align}
where
\[
    \norm{v_h}_{\Lop_{\kappa,h},h}^2
    :=\innerh{v_h}{\Lop_{\kappa,h}v_h}.
\]
In particular, under the assumptions of Theorem~\ref{thm:mbp},
\[
    \boxed{
    \E_{\theta,h}[u_h^{n+1}]
    \le \E_{\theta,h}[u_h^n],
    \qquad \forall\tau>0.
    }
\]
\end{theorem}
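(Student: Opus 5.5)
The plan is to test the coercive reformulation of Lemma~\ref{lem:coercive-reformulation} against the increment $u_h^{n+1}-u_h^n=\tau\dt u_h^{n+1}$, and to split the energy through the shifted decomposition~\eqref{eq:shifted-energy-decomposition}. The quadratic part will be treated exactly by a polarization identity. The nonlinear part will be handled by concavity of $G_{\kappa,\theta}$, i.e.\ by a one-sided Taylor bound.

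\emph{Step 1 (inner product).} Take the discrete inner product of~\eqref{eq:coercive-form} with $u_h^{n+1}-u_h^n$. This gives
\[
\tau\innerh{\Mfun(\tau\Lop_{\kappa,h})\dt u_h^{n+1}}{\dt u_h^{n+1}}
+\innerh{\Lop_{\kappa,h}u_h^{n+1}}{u_h^{n+1}-u_h^n}
=\innerh{H_{\kappa,\theta}(u_h^n)}{u_h^{n+1}-u_h^n}.
\]
Because $\Lop_{\kappa,h}$ is symmetric and nonnegative under~\eqref{eq:nonnegative-kappa}, we can use $2\innerh{Lb}{b-a}=\|b\|_L^2-\|a\|_L^2+\|b-a\|_L^2$. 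The second term then equals
\[
\tfrac12\|u_h^{n+1}\|_{\Lop_{\kappa,h},h}^2-\tfrac12\|u_h^n\|_{\Lop_{\kappa,h},h}^2+\tfrac12\|u_h^{n+1}-u_h^n\|_{\Lop_{\kappa,h},h}^2.
\]

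\emph{Step 2 (nonlinear part).} Observe that $H_{\kappa,\theta}(z)=\kappa z-F_\theta'(z)=-G_{\kappa,\theta}'(z)$. By~\eqref{eq:semiconcavity-condition}, $G_{\kappa,\theta}$ is concave on the convex interval $\Aset$. Since $u_j^n$ and $u_j^{n+1}$ lie in $\Aset$ for each node $j$, the tangent-line inequality for concave functions gives
\[
G_{\kappa,\theta}(u_j^{n+1})-G_{\kappa,\theta}(u_j^n)\le G_{\kappa,\theta}'(u_j^n)\,(u_j^{n+1}-u_j^n).
\]
Multiplying by $\omega_j>0$ and summing, which~\eqref{eq:positive-quadrature} permits, yields
\[
\innerh{G_{\kappa,\theta}(u_h^{n+1})-G_{\kappa,\theta}(u_h^n)}{1}\le -\innerh{H_{\kappa,\theta}(u_h^n)}{u_h^{n+1}-u_h^n}.
\]

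\emph{Step 3 (combine).} Add the identity from Step~1 to the inequality from Step~2, and use~\eqref{eq:shifted-energy-decomposition} to recognize $\E_{\theta,h}$. This gives~\eqref{eq:energy-dissipation}. Nonpositivity of the right-hand side follows from two facts. First, $\Mfun(\tau\Lop_{\kappa,h})$ is symmetric positive definite by Lemma~\ref{lem:coercive-reformulation}. Second, the $\Lop_{\kappa,h}$-seminorm is nonnegative.

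For the boxed corollary, note that under the hypotheses of Theorem~\ref{thm:mbp} all iterates satisfy $\|u_h^n\|_{\infty,h}\le\beta$. We may therefore take $\Aset=[-\beta,\beta]$, where the curvature condition holds for every $\tau>0$.

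The only delicate point is Step~2. The concavity bound must be applied nodewise, which requires both iterates to lie in the convex set where $F_\theta''\le\kappa$. This is exactly why the maximum-bound principle is needed to make the result unconditional in $\tau$. The remaining steps are algebraic.
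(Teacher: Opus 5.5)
Your proposal is correct and follows essentially the same argument as the paper. Like the paper, you test the coercive form of Lemma~\ref{lem:coercive-reformulation} against $u_h^{n+1}-u_h^n$, expand the $\Lop_{\kappa,h}$ term by the symmetric polarization identity, bound the nonlinear term nodewise by concavity of $G_{\kappa,\theta}$ summed with the positive quadrature weights, and recombine through~\eqref{eq:shifted-energy-decomposition}, with nonpositivity coming from Lemma~\ref{lem:coercive-reformulation} and~\eqref{eq:nonnegative-kappa}.
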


\begin{proof}
By Lemma~\ref{lem:coercive-reformulation} and
\[
    H_{\kappa,\theta}(u_h^n)
    =-G_{\kappa,\theta}'(u_h^n),
\]
we have
\[
    \Mfun(\tau\Lop_{\kappa,h})\dt u_h^{n+1}
    +\Lop_{\kappa,h}u_h^{n+1}
    =-G_{\kappa,\theta}'(u_h^n).
\]
Taking the discrete inner product with
\[
    \du u_h^{n+1}:=u_h^{n+1}-u_h^n
    =\tau\dt u_h^{n+1}
\]
and using the symmetry identity
\[
    \innerh{\Lop_{\kappa,h}u_h^{n+1}}
    {u_h^{n+1}-u_h^n}
    =\frac12\left(
      \norm{u_h^{n+1}}_{\Lop_{\kappa,h},h}^2
      -\norm{u_h^n}_{\Lop_{\kappa,h},h}^2
      +\norm{u_h^{n+1}-u_h^n}_{\Lop_{\kappa,h},h}^2
    \right),
\]
gives
\begin{align}
    &\tau\innerh{
    \Mfun(\tau\Lop_{\kappa,h})\dt u_h^{n+1}}
    {\dt u_h^{n+1}}
    \nonumber\\
    &\quad+
    \frac12\left(
    \norm{u_h^{n+1}}_{\Lop_{\kappa,h},h}^2
    -\norm{u_h^n}_{\Lop_{\kappa,h},h}^2
    +\norm{\du u_h^{n+1}}_{\Lop_{\kappa,h},h}^2
    \right)
    \nonumber\\
    &=-\innerh{G_{\kappa,\theta}'(u_h^n)}
    {\du u_h^{n+1}}.
    \label{eq:energy-proof-identity}
\end{align}
Because $G_{\kappa,\theta}$ is concave on $\Aset$,
the supporting-line inequality is valid on every segment contained in $\Aset$. Since $\Aset$ is convex, the line segment joining each pair of nodal values $u_{h,j}^n$ and $u_{h,j}^{n+1}$ remains in $\Aset$. Hence the inequality applies at every node:
\[
    G_{\kappa,\theta}(u_h^{n+1})
    -G_{\kappa,\theta}(u_h^n)
    \le
    G_{\kappa,\theta}'(u_h^n)
    \du u_h^{n+1}
\]
pointwise. Summation in the discrete inner product therefore yields
\[
    -\innerh{G_{\kappa,\theta}'(u_h^n)}
    {\du u_h^{n+1}}
    \le
    -\innerh{
    G_{\kappa,\theta}(u_h^{n+1})
    -G_{\kappa,\theta}(u_h^n)}{1}.
\]
Combining this estimate with~\eqref{eq:energy-proof-identity} and the energy
decomposition~\eqref{eq:shifted-energy-decomposition}
gives~\eqref{eq:energy-dissipation}.
More explicitly, these relations imply
\begin{align*}
    &\E_{\theta,h}[u_h^{n+1}]-\E_{\theta,h}[u_h^n]
      +\tau\innerh{\Mfun(\tau\Lop_{\kappa,h})\dt u_h^{n+1}}
      {\dt u_h^{n+1}}
      +\frac12\norm{\du u_h^{n+1}}_{\Lop_{\kappa,h},h}^2
    \le0,
\end{align*}
which is the first inequality in~\eqref{eq:energy-dissipation} after
rearrangement. The two terms on the right-hand side of that inequality are
nonpositive: $\Mfun(\tau\Lop_{\kappa,h})$ is positive definite by
Lemma~\ref{lem:coercive-reformulation}, whereas $\Lop_{\kappa,h}$ is
nonnegative by~\eqref{eq:nonnegative-kappa}. Thus \emph{unconditional} means
that no time-step restriction is imposed; the two numerical states must still
belong to the stated admissible set.
\end{proof}

\begin{remark}[Role of the maximum-bound principle]
The maximum-bound principle is not intrinsically required by the energy proof. Its role is to provide an invariant interval on which the upper curvature bound $F_\theta''\le\kappa$ is valid. If $F_\theta''$ is globally bounded above, or if another a priori argument confines the numerical solution to a curvature-bounded set, Theorem~\ref{thm:energy} remains valid without an MBP.
\end{remark}

\begin{corollary}[Structure preservation with a learned Laplacian coefficient]
\label{cor:learned-coefficient-structure}
Let $A_h=-\Delta_h$ be symmetric nonnegative and suppose that
$e^{-tA_h}$ is positivity preserving and $\ell_h^\infty$-contractive.  For
any inferred coefficient $a_\alpha\ge0$, set
$\Lop_h=a_\alpha A_h$ and use the
update~\eqref{eq:joint-coefficient-etd1}. Under the endpoint and curvature
assumptions of Theorems~\ref{thm:mbp} and~\ref{thm:energy}, and provided that
$\norm{u_h^0}_{\infty,h}\le\beta$,
\begin{equation}
    \norm{u_h^n}_{\infty,h}\le\beta,
    \qquad
    \E_{\alpha,\theta,h}[u_h^{n+1}]
    \le\E_{\alpha,\theta,h}[u_h^n]
    \label{eq:learned-coefficient-structure}
\end{equation}
for every $\tau>0$.  These properties do not require
$a_\alpha$ to be close to $a_\star$; they concern the dynamics and inferred
energy of the inferred pair $(a_\alpha,F_\theta)$.
\end{corollary}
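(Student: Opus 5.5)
The corollary is a specialization of Theorems~\ref{thm:mbp} and~\ref{thm:energy}. I would instantiate the abstract linear operator as $\Lop_h:=a_\alpha A_h$ and check that it satisfies the hypotheses those results place on $\Lop_h$. Once that is done, both conclusions transfer verbatim, with $\Lop_{\kappa,h}=\Lop_{\alpha,\kappa,h}$ and $\E_{\theta,h}=\E_{\alpha,\theta,h}$.

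\textbf{Step 1: hypotheses on $\Lop_h$.} Since $A_h$ is symmetric nonnegative and $a_\alpha\ge0$, the operator $a_\alpha A_h$ is symmetric nonnegative. This is what Lemma~\ref{lem:coercive-reformulation} uses: the spectral calculus for $\phiOne$ and $\Mfun$, and the positive definiteness of $\Mfun(\tau\Lop_{\alpha,\kappa,h})$ under~\eqref{eq:nonnegative-kappa}.

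\textbf{Step 2: Assumption~\ref{ass:positive-semigroup}.} I would use $e^{-t a_\alpha A_h}=e^{-(a_\alpha t)A_h}$. The time $s=a_\alpha t$ is nonnegative for every $t\ge0$, so positivity preservation and $\ell_h^\infty$-contractivity of $e^{-sA_h}$ carry over. The degenerate case $a_\alpha=0$ gives the identity semigroup, which trivially has both properties. This is the only place where the sign of $a_\alpha$ matters, and it is the step I would write most carefully. Beyond that the main point is simply bookkeeping: the update~\eqref{eq:joint-coefficient-etd1} must be literally~\eqref{eq:etd1} with this $\Lop_h$. That holds because $H_{\kappa,\theta}(u)=\kappa u-F_\theta'(u)$.

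\textbf{Step 3: maximum bound.} With the endpoint condition (Assumption~\ref{ass:endpoint}), $\kappa\ge\sup_{|z|\le\beta}F_\theta''$, and $\norm{u_h^0}_{\infty,h}\le\beta$, Theorem~\ref{thm:mbp} gives $\norm{u_h^n}_{\infty,h}\le\beta$ for all $n$ and every $\tau>0$.

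\textbf{Step 4: energy dissipation.} I would then apply Theorem~\ref{thm:energy} with $\Aset=[-\beta,\beta]$. This interval is convex and, by Step~3, contains $u_h^n$ and $u_h^{n+1}$ pointwise. On it, $F_\theta''\le\kappa$. The energy in that theorem, $\frac12\innerh{v_h}{\Lop_h v_h}+\innerh{F_\theta(v_h)}{1}$, is exactly $\E_{\alpha,\theta,h}$ from~\eqref{eq:joint-learned-energy} once $\Lop_h=a_\alpha A_h$, which yields the monotonicity in~\eqref{eq:learned-coefficient-structure}.

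Finally, I would note that no step uses $a_\star$ or any identification estimate. This justifies the closing remark that the corollary's properties hold for any inferred pair $(a_\alpha,F_\theta)$, however far $a_\alpha$ is from $a_\star$.
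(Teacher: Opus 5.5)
Your proposal is correct and matches the paper's proof. The paper likewise observes that scaling $A_h$ by $a_\alpha\ge0$ preserves symmetry, nonnegativity, positivity preservation and $\ell_h^\infty$-contractivity, so Theorems~\ref{thm:mbp} and~\ref{thm:energy} apply verbatim with the energy becoming~\eqref{eq:joint-learned-energy}. Your time-rescaling identity $e^{-ta_\alpha A_h}=e^{-(a_\alpha t)A_h}$ and your explicit use of the maximum bound to place $u_h^n,u_h^{n+1}$ in $[-\beta,\beta]$ simply spell out those steps; one small correction is that the sign of $a_\alpha$ is also used in your Step~1, not only in Step~2.
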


\begin{proof}
Scaling $A_h$ by the nonnegative number $a_\alpha$ preserves symmetry,
nonnegativity, positivity preservation, and semigroup contractivity.  Hence
all hypotheses involving the linear operator in Theorems~\ref{thm:mbp}
and~\ref{thm:energy} remain valid. Their discrete energy becomes
exactly~\eqref{eq:joint-learned-energy}, which proves the assertion.
\end{proof}

\section{Error analysis with constitutive and coefficient inference errors}

We first compare the fully discrete \TIFE-based ETD solution with the exact solution of the true gradient flow when the linear operator is supplied,
\begin{equation}
    u_t+\Lop u=-F'(u).
    \label{eq:true-pde-error}
\end{equation}
The key point is that the total error naturally separates into discretization error and \TIFE-induced-force inference error.

\subsection{Assumptions for the error estimate}
Let $\Ih$ be the grid restriction operator. We assume that there exists a compact convex interval $\Aset$ such that
\begin{equation}
    u(x,t)\in\Aset,
    \qquad
    u_{h,j}^n\in\Aset,
    \label{eq:common-admissible-set}
\end{equation}
for all $x\in\Omega$, $j\in\mathcal J_h$, $0\le t\le T$, and $t_n\le T$.
In the MBP setting, the second inclusion follows from
Theorem~\ref{thm:mbp}.

Define
\begin{equation}
    \delta_F
    :=\norm{F_\theta'-F'}_{L^\infty(\Aset)},
    \label{eq:inference-error}
\end{equation}
and
\begin{equation}
    L_\theta
    :=\norm{\kappa-F_\theta''}_{L^\infty(\Aset)}.
    \label{eq:tife-Lipschitz}
\end{equation}
We also set
\begin{equation}
    L_H:=\norm{\kappa-F''}_{L^\infty(\Aset)},
    \label{eq:true-Lipschitz}
\end{equation}
which is finite whenever $F\in C^2(\Aset)$ and controls the Lipschitz constant of the stabilized true nonlinearity $H$.
For a family of \TIFE{} models, a uniform bound on $L_\theta$ is required if the final error constant is to be independent of the particular training realization.

We further assume the second-order spatial consistency
\begin{equation}
    \sup_{0\le t\le T}
    \normh{
    \Lop_{\kappa,h}\Ih u(t)
    -\Ih\Lop_\kappa u(t)}
    \le Ch^2,
    \label{eq:spatial-consistency}
\end{equation}
where $\Lop_\kappa=\Lop+\kappa I$, together with sufficient regularity, for example
\begin{equation}
    u\in L^\infty(0,T;H^4(\Omega)),
    \qquad
    u_t\in L^\infty(0,T;L^2(\Omega)).
    \label{eq:regularity}
\end{equation}
For standard second-order finite differences,
condition~\eqref{eq:spatial-consistency} follows from~\eqref{eq:regularity}.
This assertion is intended for the usual Laplacian-type second-order operators
and their standard consistent stencils; for a different spatial operator,
condition~\eqref{eq:spatial-consistency} is an independent assumption that must be
verified for the chosen discretization.
To justify the time-increment estimate after nodal restriction, we additionally impose the restriction-stability condition
\begin{equation}
    \sup_{0\le t\le T}\normh{\Ih u_t(t)}\le C_u,
    \label{eq:restriction-stability}
\end{equation}
with $C_u$ independent of $h$. This condition follows, for example, if
$u_t\in L^\infty(0,T;H^s(\Omega))$ with $s>d/2$ for a standard nodal
restriction. Alternatively, it may be taken as a direct consistency
assumption for the chosen restriction or projection operator. The $L^2$
regularity of $u_t$ in~\eqref{eq:regularity} alone does not, in general,
control its point samples uniformly in $h$.

\subsection{Nonlinear perturbation estimate}
Define the stabilized true and \TIFE-induced nonlinearities
\begin{equation}
    H(z):=\kappa z-F'(z),
    \qquad
    H_\theta(z):=\kappa z-F_\theta'(z).
    \label{eq:H-true-tife}
\end{equation}

\begin{lemma}[Separation of numerical and inference errors]
\label{lem:nonlinear-error}
For any $v,w\in\Aset$,
\begin{equation}
    |H_\theta(v)-H(w)|
    \le L_\theta|v-w|+\delta_F.
    \label{eq:nonlinear-perturbation}
\end{equation}
Consequently, for grid functions $v_h,w_h$ taking values in $\Aset$,
\begin{equation}
    \normh{H_\theta(v_h)-H(w_h)}
    \le L_\theta\normh{v_h-w_h}
    +\normh{1}\,\delta_F.
    \label{eq:nonlinear-grid-bound}
\end{equation}
\end{lemma}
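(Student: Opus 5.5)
The plan is to insert and subtract the intermediate value $H(v)$, so that one piece compares the learned and true nonlinearities at the same argument and the other compares two arguments of the learned nonlinearity. Concretely, for $v,w\in\Aset$ write
\[
    H_\theta(v)-H(w)
    =\bigl[H_\theta(v)-H_\theta(w)\bigr]
    +\bigl[H_\theta(w)-H(w)\bigr].
\]
The second bracket equals $-(F_\theta'(w)-F'(w))$ because the $\kappa w$ terms cancel. Since $w\in\Aset$, it is bounded in absolute value by $\delta_F$ from~\eqref{eq:inference-error}.

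For the first bracket, I will use that $H_\theta(z)=\kappa z-F_\theta'(z)$ is $C^1$ on $\Aset$ with derivative $\kappa-F_\theta''$. Because $\Aset$ is a convex interval, the segment between $v$ and $w$ lies in $\Aset$. The mean value theorem, or equivalently the fundamental theorem of calculus along that segment, then gives $|H_\theta(v)-H_\theta(w)|\le \norm{\kappa-F_\theta''}_{L^\infty(\Aset)}|v-w|=L_\theta|v-w|$, by~\eqref{eq:tife-Lipschitz}. Adding the two bounds yields~\eqref{eq:nonlinear-perturbation}.

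The grid version~\eqref{eq:nonlinear-grid-bound} follows by applying the scalar bound nodewise and then summing. At each node $j$, since $v_j,w_j\in\Aset$, we have $|H_\theta(v_j)-H(w_j)|\le L_\theta|v_j-w_j|+\delta_F$. The right-hand side is nonnegative, so the grid function $H_\theta(v_h)-H(w_h)$ is dominated pointwise in absolute value by $L_\theta|v_h-w_h|+\delta_F\cdot 1$. The positive-weight quadrature~\eqref{eq:positive-quadrature} makes $\normh{\cdot}$ monotone with respect to pointwise absolute-value domination. The triangle inequality then gives $L_\theta\normh{v_h-w_h}+\delta_F\normh{1}$.

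There is no real obstacle here. The only points needing care are the ones above: convexity of $\Aset$ for the mean value step, the restriction $w\in\Aset$ in the definition of $\delta_F$, and positivity of the weights $\omega_j$ for the norm monotonicity.
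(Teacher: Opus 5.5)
Your proposal is correct and follows the paper's own proof: add and subtract $H_\theta(w)$, bound $H_\theta(v)-H_\theta(w)$ by $L_\theta|v-w|$ via the mean value theorem, bound $F'(w)-F_\theta'(w)$ by $\delta_F$, then apply the scalar bound nodewise and take the discrete norm. One wording slip: your opening sentence says you insert $H(v)$, but your displayed decomposition (correctly) inserts $H_\theta(w)$ — inserting $H(v)$ would instead give the bound $L_H|v-w|+\delta_F$, not the stated one with $L_\theta$.
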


\begin{proof}
Add and subtract $H_\theta(w)$:
\begin{align*}
    H_\theta(v)-H(w)
    &=[H_\theta(v)-H_\theta(w)]
      +[F'(w)-F_\theta'(w)].
\end{align*}
By the mean-value theorem,
\[
    |H_\theta(v)-H_\theta(w)|
    \le L_\theta|v-w|,
\]
and the second term is bounded by $\delta_F$ by definition. This
proves~\eqref{eq:nonlinear-perturbation}; the grid estimate follows by applying
the pointwise inequality at every node and then taking the discrete norm.
\end{proof}

\subsection{Local consistency of the \TIFE-based ETD1 scheme}
Let
\begin{equation}
    S_{\tau,h}:=e^{-\tau\Lop_{\kappa,h}},
    \qquad
    B_{\tau,h}:=\tau\phiOne(\tau\Lop_{\kappa,h}).
\end{equation}
The restricted exact solution $U_h(t):=\Ih u(t)$ satisfies
\begin{equation}
    \frac{d}{dt}U_h(t)
    +\Lop_{\kappa,h}U_h(t)
    =H(U_h(t))+R_h(t),
    \label{eq:restricted-exact}
\end{equation}
where
\begin{equation}
    R_h(t)
    :=\Lop_{\kappa,h}\Ih u(t)
      -\Ih\Lop_\kappa u(t).
\end{equation}
Here we used the nodal identities $\Ih F'(u)=F'(\Ih u)$ and
$\Ih(\kappa u)=\kappa\Ih u$. Indeed, restricting~\eqref{eq:true-pde-error} in
its stabilized form gives
\[
    \frac{d}{dt}U_h(t)+\Ih\Lop_\kappa u(t)=H(U_h(t)),
\]
and adding $\Lop_{\kappa,h}U_h-\Ih\Lop_\kappa u=R_h$
proves~\eqref{eq:restricted-exact}, including the sign of $R_h$.
Variation of constants gives
\begin{equation}
    U_h(t_{n+1})
    =S_{\tau,h}U_h(t_n)
     +B_{\tau,h}H(U_h(t_n))
     +\xi_h^{n+1},
    \label{eq:exact-etd-relation}
\end{equation}
with
\begin{align}
    \xi_h^{n+1}
    :={}&
    \int_0^\tau
    e^{-(\tau-s)\Lop_{\kappa,h}}
    \bigl[H(U_h(t_n+s))-H(U_h(t_n))\bigr]ds
    \nonumber\\
    &+
    \int_0^\tau
    e^{-(\tau-s)\Lop_{\kappa,h}}
    R_h(t_n+s)\,ds.
    \label{eq:local-defect}
\end{align}

\begin{lemma}[Local consistency]
\label{lem:local-consistency}
Assume~\eqref{eq:spatial-consistency}--\eqref{eq:regularity}
and~\eqref{eq:restriction-stability}, and let $F\in C^2(\Aset)$. Then
\begin{equation}
    \normh{\xi_h^{n+1}}
    \le C\tau(\tau+h^2),
    \qquad t_{n+1}\le T.
    \label{eq:local-consistency-bound}
\end{equation}
\end{lemma}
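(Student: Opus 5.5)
The bound follows by estimating the two integrals in~\eqref{eq:local-defect} separately in the discrete norm. In both, the only operator fact needed is the contractivity
\[
\norm{e^{-(\tau-s)\Lop_{\kappa,h}}}_{h\to h}\le e^{-\kappa(\tau-s)}\le 1,\qquad 0\le s\le\tau .
\]
This holds because $\Lop_{\kappa,h}=\Lop_h+\kappa I$ is symmetric and nonnegative by~\eqref{eq:nonnegative-kappa}, so spectral calculus applies.

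For the first integral, I would use that $H(z)=\kappa z-F'(z)$ is Lipschitz on $\Aset$ with constant $L_H$ from~\eqref{eq:true-Lipschitz}. This needs $F\in C^2(\Aset)$ and the fact that $U_h(t)=\Ih u(t)$ takes values in $\Aset$ by~\eqref{eq:common-admissible-set}. The estimate is applied nodewise and then summed with the positive weights~\eqref{eq:positive-quadrature}, giving
\[
\normh{H(U_h(t_n+s))-H(U_h(t_n))}\le L_H\normh{U_h(t_n+s)-U_h(t_n)} .
\]
Next I write
\[
U_h(t_n+s)-U_h(t_n)=\int_0^s \Ih u_t(t_n+r)\,dr .
\]
Since $\Ih$ commutes with time differentiation at nodes, this is legitimate given the regularity~\eqref{eq:regularity}. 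Minkowski's inequality and~\eqref{eq:restriction-stability} then bound the difference by $C_u s$. Integrating $L_H C_u s$ over $s\in[0,\tau]$ gives $\tfrac12 L_H C_u\tau^2$.

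For the second integral, contractivity together with the spatial consistency~\eqref{eq:spatial-consistency} gives $\normh{R_h(t_n+s)}\le Ch^2$ uniformly in $s$, whenever $t_n+s\le t_{n+1}\le T$. Integrating over $[0,\tau]$ yields the bound $C\tau h^2$. Adding the two contributions gives~\eqref{eq:local-consistency-bound}, with a constant that depends on $L_H$, $C_u$, and the consistency constant, but not on $\tau$ or $h$.

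The main obstacle is the time-increment step. It needs a bound on $\normh{\Ih u_t}$ that is uniform in $h$; the $L^2$ regularity of $u_t$ alone does not give this, as the paper itself notes. The bound is exactly what~\eqref{eq:restriction-stability} provides, so I would invoke that assumption directly. I would also take care to justify that $\tfrac{d}{dt}\Ih u=\Ih u_t$ pointwise in time, which holds because nodal restriction is linear and $u$ is continuous in $x$ under the $H^4$ regularity.
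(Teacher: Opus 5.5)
Your proposal is correct and follows essentially the same argument as the paper. Both use contractivity of $e^{-t\Lop_{\kappa,h}}$, the $L_H$-Lipschitz bound on $H$ together with the nodal fundamental theorem of calculus and the restriction-stability condition to bound the first integral by $\frac12 L_H C_u\tau^2$, and the spatial-consistency bound on $R_h$ to bound the second integral by $C\tau h^2$.
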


\begin{proof}
Since $\Lop_{\kappa,h}$ is symmetric nonnegative,
\[
    \norm{e^{-t\Lop_{\kappa,h}}}_{h\to h}\le1.
\]
Because $F\in C^2(\Aset)$, $H$ is Lipschitz on $\Aset$.
By~\eqref{eq:restriction-stability},~\eqref{eq:true-Lipschitz}, and the
fundamental theorem of calculus,
\[
    \normh{U_h(t_n+s)-U_h(t_n)}
    \le \int_{t_n}^{t_n+s}\normh{\Ih u_t(r)}\,dr
    \le C_u s,
    \qquad 0\le s\le\tau,
\]
and the constant in the preceding display may be taken as $L_H C_u$, independently of $h$.
Consequently,
\[
    \normh{H(U_h(t_n+s))-H(U_h(t_n))}
    \le L_H C_u s.
\]
Therefore the first integral in~\eqref{eq:local-defect} is $O(\tau^2)$. By
condition~\eqref{eq:spatial-consistency}, the second integral is
$O(\tau h^2)$. This
proves~\eqref{eq:local-consistency-bound}.
Indeed, semigroup contractivity yields
\[
    \left\|\int_0^\tau e^{-(\tau-s)\Lop_{\kappa,h}}
    [H(U_h(t_n+s))-H(U_h(t_n))]\,ds\right\|_h
    \le \int_0^\tau L_H C_u s\,ds
    =\frac12 L_H C_u\tau^2,
\]
whereas the residual term satisfies
\[
    \left\|\int_0^\tau e^{-(\tau-s)\Lop_{\kappa,h}}R_h(t_n+s)\,ds\right\|_h
    \le\int_0^\tau\normh{R_h(t_n+s)}\,ds
    \le C\tau h^2.
\]
\end{proof}

\subsection{Fully discrete error estimate}
Define
\begin{equation}
    e_h^n:=u_h^n-U_h(t_n).
    \label{eq:error-def}
\end{equation}
Subtracting~\eqref{eq:exact-etd-relation} from the numerical scheme gives
\begin{equation}
    e_h^{n+1}
    =S_{\tau,h}e_h^n
    +B_{\tau,h}
    \bigl[H_\theta(u_h^n)-H(U_h(t_n))\bigr]
    -\xi_h^{n+1}.
    \label{eq:error-equation}
\end{equation}

\begin{theorem}[Fully discrete error with force inference error]
\label{thm:error}
Assume the common-interval condition~\eqref{eq:common-admissible-set}, the
spatial-consistency condition~\eqref{eq:spatial-consistency}, the
regularity condition~\eqref{eq:regularity}, and the restriction-stability
condition~\eqref{eq:restriction-stability}, with
$F,F_\theta\in C^2(\Aset)$ and
\[
    \delta_F
    =\norm{F_\theta'-F'}_{L^\infty(\Aset)}<\infty,
    \qquad
    L_\theta
    =\norm{\kappa-F_\theta''}_{L^\infty(\Aset)}<\infty.
\]
Then, for $0<\tau\le\tau_0$ and $0<h\le h_0$ with no coupling condition between $\tau$ and $h$,
\begin{equation}
    \boxed{
    \max_{0\le n\le N_T}
    \normh{u_h^n-\Ih u(t_n)}
    \le
    C_T\left(
    \normh{u_h^0-\Ih u(0)}
    +\tau+h^2+\delta_F
    \right),
    }
    \label{eq:main-error-estimate}
\end{equation}
where $N_T\tau\le T$. If, in addition,
\begin{equation}
    \normh{u_h^0-\Ih u(0)}\le Ch^2,
\end{equation}
then
\begin{equation}
    \boxed{
    \max_{0\le n\le N_T}
    \normh{u_h^n-\Ih u(t_n)}
    \le C_T(\tau+h^2+\delta_F).
    }
    \label{eq:simplified-error-estimate}
\end{equation}
If a family of \TIFE{} densities is considered, the constant $C_T$ is uniform provided $L_\theta$ and the relevant regularity bounds are uniformly bounded.
\end{theorem}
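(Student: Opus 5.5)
The plan is to start from the error recursion~\eqref{eq:error-equation} and turn it into a discrete Gronwall inequality in $\normh{\cdot}$. Taking norms, we use three facts. First, $\norm{S_{\tau,h}}_{h\to h}\le e^{-\kappa\tau}\le1$; this holds because $\Lop_{\kappa,h}$ is symmetric and nonnegative by~\eqref{eq:nonnegative-kappa}. Second, $\norm{B_{\tau,h}}_{h\to h}\le\tau$; this follows from spectral calculus, since $0<\phiOne(z)\le1$ for $z\ge0$. Third, Lemma~\ref{lem:nonlinear-error} applies with $v_h=u_h^n$ and $w_h=U_h(t_n)$, which is legitimate because both take values in $\Aset$ by~\eqref{eq:common-admissible-set}. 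Together these give
\begin{equation*}
    \normh{e_h^{n+1}}
    \le (1+\tau L_\theta)\normh{e_h^n}
    +\tau\normh{1}\,\delta_F
    +\normh{\xi_h^{n+1}}.
\end{equation*}
Lemma~\ref{lem:local-consistency} bounds the last term by $C\tau(\tau+h^2)$.

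Next we iterate this one-step inequality. Write $\gamma:=1+\tau L_\theta\le e^{\tau L_\theta}$. Induction on $n$ gives
\begin{equation*}
    \normh{e_h^n}\le \gamma^n\normh{e_h^0}
    +\sum_{k=0}^{n-1}\gamma^{n-1-k}\,\tau\bigl(\normh{1}\delta_F+C(\tau+h^2)\bigr).
\end{equation*}
For $n\tau\le T$ we have $\gamma^n\le e^{L_\theta T}$. Also $\sum_{k}\tau\gamma^{n-1-k}\le Te^{L_\theta T}$. Since $\normh{1}$ is bounded independently of $h$ by the quadrature assumption, this yields~\eqref{eq:main-error-estimate} with $C_T=\max\{1,T\}\,e^{L_\theta T}\max\{1,\normh{1},C\}$. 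No restriction linking $\tau$ and $h$ appears, because all operator bounds are uniform in both. The simplified estimate~\eqref{eq:simplified-error-estimate} follows immediately by inserting $\normh{e_h^0}\le Ch^2$. The uniformity remark for a family of \TIFE{} densities also follows: $C_T$ depends only on $L_\theta$, $T$, $\normh{1}$, and the consistency constant, which involves $L_H$, $C_u$ and the constant in~\eqref{eq:spatial-consistency}.

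The only delicate point is justifying the use of Lemma~\ref{lem:nonlinear-error} at every step. This needs the numerical iterates to stay in $\Aset$, and that is exactly what~\eqref{eq:common-admissible-set} assumes; in the MBP setting Theorem~\ref{thm:mbp} supplies it. Without this hypothesis one would need an induction that bounds $\normh{e_h^n}$ and then converts it into a nodal bound, and that conversion costs $\omega_{\min,h}^{-1/2}$ and would impose a coupling condition. Because the common-interval condition is assumed directly, no such argument is needed. The rest is a routine linear Gronwall bookkeeping argument. We expect the main care to go into checking that $\norm{B_{\tau,h}}_{h\to h}\le\tau$ holds even when $\Lop_{\kappa,h}$ is singular, which the spectral-calculus argument in Lemma~\ref{lem:coercive-reformulation} handles.
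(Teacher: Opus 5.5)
Your proposal is correct and follows essentially the same route as the paper's proof. Like the paper, you combine contractivity of $S_{\tau,h}$, the bound $\norm{B_{\tau,h}}_{h\to h}\le\tau$, Lemma~\ref{lem:nonlinear-error} applied on the common interval, and Lemma~\ref{lem:local-consistency} into the one-step recursion, then iterate it as a discrete Gronwall inequality; your explicit constant $\max\{1,T\}e^{L_\theta T}\max\{1,\normh{1},C\}$ is just a concrete form of the paper's $e^{CT}(1+CT)$-type bound.
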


\begin{proof}
Since $\Lop_{\kappa,h}$ is symmetric nonnegative,
\begin{equation}
    \norm{S_{\tau,h}}_{h\to h}\le1.
\end{equation}
Furthermore, for $z\ge0$,
\[
    0<\phiOne(z)\le1,
\]
so
\begin{equation}
    \norm{B_{\tau,h}}_{h\to h}\le\tau.
    \label{eq:B-bound}
\end{equation}
Using~\eqref{eq:error-equation}, Lemmas~\ref{lem:nonlinear-error}
and~\ref{lem:local-consistency}, and~\eqref{eq:B-bound}, we obtain
\begin{align}
    \normh{e_h^{n+1}}
    &\le
    \normh{e_h^n}
    +\tau\left(
    L_\theta\normh{e_h^n}
    +\normh{1}\delta_F
    \right)
    +C\tau(\tau+h^2)
    \nonumber\\
    &\le
    (1+C\tau)\normh{e_h^n}
    +C\tau(\delta_F+\tau+h^2).
    \label{eq:error-recursion}
\end{align}
Applying the discrete Gronwall inequality on $0\le t_n\le T$ yields
\begin{equation}
    \normh{e_h^n}
    \le
    C_T\left(
    \normh{e_h^0}
    +\delta_F+\tau+h^2
    \right).
\end{equation}
For completeness, this follows by iterating~\eqref{eq:error-recursion}. With
$q:=\delta_F+\tau+h^2$,
\[
    \normh{e_h^n}
    \le (1+C\tau)^n\normh{e_h^0}
      +C\tau q\sum_{j=0}^{n-1}(1+C\tau)^j
    \le e^{CT}\bigl(\normh{e_h^0}+C T q\bigr),
\]
where the last inequality also covers $C=0$ by continuity. Hence the constant is independent of $h$, $\tau$, and $\delta_F$; it may depend on $T$, $\Aset$, the uniform bounds for $L_\theta$ and $L_H$, the spatial-consistency constant, $C_u$, and the domain through the uniform bound for $\normh{1}$.
Taking the maximum over $0\le n\le N_T$ gives~\eqref{eq:main-error-estimate};
the second conclusion follows from the assumed initial projection error.
\end{proof}

\begin{corollary}[Training-residual-to-rollout estimate]
\label{cor:data-to-rollout}
Assume the hypotheses of Corollary~\ref{cor:uniform-force-from-training} and
Theorem~\ref{thm:error}, and suppose that the exact and numerical rollout
states remain in $\Aset_{\rm data}$.  Then
\begin{equation}
    \boxed{
    \max_{0\le n\le N_T}
    \normh{u_h^n-\Ih u(t_n)}
    \le C_T\left(
      \normh{u_h^0-\Ih u(0)}
      +\tau+h^2+\mathfrak E_{\rm data}
    \right),}
    \label{eq:data-to-rollout-estimate}
\end{equation}
where $\mathfrak E_{\rm data}$ is given
by~\eqref{eq:data-force-estimator}. Hence the learned-model rollout error is
controlled by the achieved ETD response residual, observation perturbations,
the finite-$\Delta t$ freezing defect, and state-space coverage, in addition
to the deployment discretization errors.  The displayed formula uses the same
grid symbol for training and deployment.  If different grids are used,
$\mathfrak E_{\rm data}$ contains the training-grid constants
$\beta_{\Delta t,h_{\rm tr}}$ and $\omega_{\min,h_{\rm tr}}$, whereas the
spatial consistency term is $h_{\rm dep}^2$.
\end{corollary}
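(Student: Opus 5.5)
The plan is to compose Theorem~\ref{thm:error} with Corollary~\ref{cor:uniform-force-from-training}. The only quantity to control is the force-inference error $\delta_F=\norm{F_\theta'-F'}_{L^\infty(\Aset)}$ appearing in \eqref{eq:main-error-estimate}. The corollary controls the same sup-norm, but over $\Aset_{\rm data}$. So the first step is to fix the admissible interval in Theorem~\ref{thm:error}. Since the exact and numerical rollout states are assumed to lie in $\Aset_{\rm data}$, which is a compact interval and hence convex, I would apply Theorem~\ref{thm:error} with $\Aset:=\Aset_{\rm data}$. The common-interval condition \eqref{eq:common-admissible-set} then holds directly, and $\delta_F$ becomes $\norm{F_\theta'-F'}_{L^\infty(\Aset_{\rm data})}$.

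The second step is to check that the remaining constants keep their meaning on this interval. The constants $L_\theta=\norm{\kappa-F_\theta''}_{L^\infty(\Aset_{\rm data})}$ and $L_H$ are finite because $F,F_\theta\in C^2(\Aset_{\rm data})$, which is a hypothesis of Theorem~\ref{thm:response-to-empirical-force}. If $\kappa$ was chosen relative to a different interval in the MBP setting, this does not matter: the error proof uses only the absolute bounds $L_\theta$ and $L_H$ and the contractivity of $e^{-t\Lop_{\kappa,h}}$, which needs only $\kappa\ge0$. The third step is to insert \eqref{eq:uniform-force-training-bound}, namely $\delta_F\le\mathfrak E_{\rm data}$, into \eqref{eq:main-error-estimate}. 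This gives \eqref{eq:data-to-rollout-estimate} with the same constant $C_T$, because the bound is linear in $\delta_F$ and $C_T$ does not depend on $\delta_F$. The interpretive sentence then follows by reading off the terms in \eqref{eq:data-force-estimator}, together with \eqref{eq:max-residual-from-loss}.

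The remaining point is the remark about different training and deployment grids. In the proof of Corollary~\ref{cor:uniform-force-from-training}, every constant ($\beta_{\Delta t,h}$, $\omega_{\min,h}$, $\mathcal Z_h$, the residuals) is computed on the grid that carries the data. The resulting bound is a statement about scalar functions on $\Aset_{\rm data}$ and does not refer to any grid. Theorem~\ref{thm:error} uses $\delta_F$ only as a scalar sup-norm, with the deployment grid entering through $h^2$ and $\normh{1}$. So the two grids decouple, and $\mathfrak E_{\rm data}$ is evaluated on $h_{\rm tr}$ while the consistency term is $h_{\rm dep}^2$.

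I expect the main obstacle to be justifying that Theorem~\ref{thm:error} may be used with $\Aset=\Aset_{\rm data}$. This requires the true solution $u(x,t)$ at all points and times, not only the nodal rollout values, to lie in $\Aset_{\rm data}$. I read the hypothesis ``exact rollout states remain in $\Aset_{\rm data}$'' as exactly this. If it only covered nodal values, I would point out that the proofs of Lemmas~\ref{lem:nonlinear-error} and~\ref{lem:local-consistency} use only nodal values of $U_h(t)=\Ih u(t)$ for $t\in[0,T]$, so the nodal version is enough.
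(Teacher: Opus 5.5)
Your proposal is correct and follows the paper's own argument. You take the common admissible interval of Theorem~\ref{thm:error} to be $\Aset_{\rm data}$, use Corollary~\ref{cor:uniform-force-from-training} to obtain $\delta_F\le\mathfrak E_{\rm data}$, and substitute this into \eqref{eq:main-error-estimate}. Your additional checks are sound elaborations of what the paper's two-line proof leaves implicit: finiteness of $L_\theta$ and $L_H$ on $\Aset_{\rm data}$, that contractivity needs only $\kappa\ge0$, the decoupling of training and deployment grids, and the sufficiency of nodal values of $\Ih u$.
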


\begin{proof}
Corollary~\ref{cor:uniform-force-from-training} gives
$\delta_F\le\mathfrak E_{\rm data}$ on the common admissible
interval. Substitution into~\eqref{eq:main-error-estimate}
proves~\eqref{eq:data-to-rollout-estimate}.
\end{proof}

\begin{remark}[Interpretation of the error estimate]
Estimate~\eqref{eq:simplified-error-estimate} separates three conceptually
different errors:
\[
    \boxed{
    \text{total error}
    =O(\tau)
    +O(h^2)
    +O(\delta_F).
    }
\]
Here $O(\tau)$ and $O(h^2)$ are the numerical discretization errors, whereas $\delta_F$ is the inference error in the \TIFE-induced variational force.  Corollary~\ref{cor:data-to-rollout} further resolves this inference term into the optimized response residual, observation error, ETD1 freezing bias, response-inversion conditioning, and state-space coverage.  Structure preservation itself does not require $\delta_F$ to be small: even an imperfect \TIFE{} density is evolved by the \TIFE-based ETD scheme, which preserves the admissible interval and dissipates its own \TIFE{} functional provided the curvature and endpoint conditions are satisfied.
\end{remark}

\subsection{Additional error from the inferred Laplacian coefficient}
\label{sec:coefficient-error}

We finally compare the jointly inferred model with the true equation
\begin{equation}
    u_t+a_\star Au=-F'(u),
    \qquad A=-\Delta,
    \label{eq:true-coefficient-model}
\end{equation}
and define
\begin{equation}
    \delta_a:=|a_\alpha-a_\star|.
    \label{eq:coefficient-error}
\end{equation}
Assume that $0\le a_\alpha\le a_{\max}$ and that the base discretization
satisfies
\begin{equation}
    \sup_{0\le t\le T}
    \normh{A_h\Ih u(t)-\Ih Au(t)}\le Ch^2,
    \qquad
    \sup_{0\le t\le T}\normh{\Ih Au(t)}\le C_A.
    \label{eq:base-operator-consistency}
\end{equation}

\begin{theorem}[Fully discrete error with coefficient and force inference]
\label{thm:coefficient-force-error}
Assume the common-interval, regularity, and restriction-stability hypotheses
of Theorem~\ref{thm:error}, together
with~\eqref{eq:base-operator-consistency}. Let $u_h^n$ be generated
by~\eqref{eq:joint-coefficient-etd1}. Then, for $N_T\tau\le T$,
\begin{equation}
    \boxed{
    \max_{0\le n\le N_T}
    \normh{u_h^n-\Ih u(t_n)}
    \le C_T\left(
       \normh{u_h^0-\Ih u(0)}
       +\tau+h^2+\delta_F+\delta_a
    \right).}
    \label{eq:coefficient-force-error-bound}
\end{equation}
The constant is uniform for $a_\alpha$ in a fixed bounded interval and for a
family of learned potentials with uniformly bounded $L_\theta$.
\end{theorem}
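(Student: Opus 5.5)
The argument follows the proof of Theorem~\ref{thm:error}. The one change is that the exact solution is now written in the frame of the \emph{learned} stabilized operator $\Lop_{\alpha,\kappa,h}=a_\alpha A_h+\kappa I$. The coefficient mismatch $a_\star-a_\alpha$ is then treated as an extra source term, in the same spirit as the spatial residual $R_h$.

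Set $U_h(t)=\Ih u(t)$ and restrict~\eqref{eq:true-coefficient-model} in stabilized form. Adding and subtracting $\Lop_{\alpha,\kappa,h}U_h$ gives
\begin{equation*}
  \frac{d}{dt}U_h+\Lop_{\alpha,\kappa,h}U_h
  =H(U_h)+R_h^\alpha(t),
\end{equation*}
with
\begin{equation*}
  R_h^\alpha
  =a_\alpha\bigl(A_h\Ih u-\Ih Au\bigr)
   +(a_\alpha-a_\star)\Ih Au .
\end{equation*}
By~\eqref{eq:base-operator-consistency} and $0\le a_\alpha\le a_{\max}$, this yields $\normh{R_h^\alpha(t)}\le a_{\max}Ch^2+C_A\delta_a$ uniformly in $t\in[0,T]$.

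Since $a_\alpha A_h+\kappa I$ is symmetric nonnegative, $S^\alpha_{\tau,h}:=e^{-\tau\Lop_{\alpha,\kappa,h}}$ is an $h$-norm contraction and $B^\alpha_{\tau,h}:=\tau\phiOne(\tau\Lop_{\alpha,\kappa,h})$ has norm at most $\tau$. Variation of constants then gives the analogue of~\eqref{eq:exact-etd-relation}, with a defect $\xi_h^{n+1}$ built exactly as in~\eqref{eq:local-defect}. Repeating the proof of Lemma~\ref{lem:local-consistency} line by line, using~\eqref{eq:restriction-stability} and $L_H$ for the freezing part, gives
\begin{equation*}
  \normh{\xi_h^{n+1}}\le C\tau(\tau+h^2+\delta_a).
\end{equation*}
The constant depends only on $a_{\max}$, $C$, $C_A$, $C_u$ and $L_H$, not on $a_\alpha$ itself. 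This is why the estimate is uniform for $a_\alpha$ in a bounded interval.

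Next, subtract this relation from~\eqref{eq:joint-coefficient-etd1}. This gives the error equation~\eqref{eq:error-equation} with $S^\alpha_{\tau,h}$ and $B^\alpha_{\tau,h}$ in place of $S_{\tau,h}$ and $B_{\tau,h}$. Lemma~\ref{lem:nonlinear-error} is unaffected by the coefficient and applies on the common interval~\eqref{eq:common-admissible-set}. It gives the recursion
\begin{equation*}
  \normh{e_h^{n+1}}
  \le(1+L_\theta\tau)\normh{e_h^n}
  +C\tau\bigl(\delta_F+\tau+h^2+\delta_a\bigr).
\end{equation*}
The discrete Gronwall iteration from the proof of Theorem~\ref{thm:error} then yields~\eqref{eq:coefficient-force-error-bound}, with $C_T$ depending on $T$, $L_\theta$, $L_H$, $a_{\max}$, $C_A$, $C_u$ and $\normh{1}$.

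The main point to check is the coefficient-mismatch term. It could be split the other way, by propagating with the true operator and putting $(a_\star-a_\alpha)A_h u_h^n$ into the scheme difference. That route would require a bound on $A_h e_h^n$ or on $A_h u_h^n$, which is not uniform in $h$. Putting the mismatch on the exact solution avoids this, because only $\normh{\Ih Au}\le C_A$ is needed, and that is exactly the hypothesis in~\eqref{eq:base-operator-consistency}. I would also verify that $a_\alpha\ge0$ keeps every semigroup bound contractive, so that $\tau$ and $h$ remain uncoupled.
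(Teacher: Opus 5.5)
Your proposal is correct and follows the paper's proof essentially step for step. The paper likewise writes the restricted exact solution in the frame of $\Lop_{\alpha,\kappa,h}=a_\alpha A_h+\kappa I$, with residual $R_{\alpha,h}=a_\alpha\bigl[A_h\Ih u-\Ih Au\bigr]+(a_\alpha-a_\star)\Ih Au$ bounded by $C(h^2+\delta_a)$ via \eqref{eq:base-operator-consistency}, derives the local defect bound $C\tau(\tau+h^2+\delta_a)$, and closes with Lemma~\ref{lem:nonlinear-error}, the bound $\norm{\tau\phiOne(\tau\Lop_{\alpha,\kappa,h})}_{h\to h}\le\tau$, and discrete Gronwall; your remark on why the coefficient mismatch belongs on the exact solution rather than the numerical iterate makes explicit a choice the paper leaves implicit.
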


\begin{proof}
With $U_h(t)=\Ih u(t)$ and
$\Lop_{\alpha,\kappa,h}=a_\alpha A_h+\kappa I$, the restricted true solution
satisfies
\begin{equation}
    \frac{d}{dt}U_h(t)
    +\Lop_{\alpha,\kappa,h}U_h(t)
    =H(U_h(t))+R_{\alpha,h}(t),
    \label{eq:coefficient-restricted-exact}
\end{equation}
where
\begin{align}
    R_{\alpha,h}(t)
    &:={}
    a_\alpha A_h\Ih u(t)-a_\star\Ih Au(t)
    \nonumber\\
    &=a_\alpha
      \bigl[A_h\Ih u(t)-\Ih Au(t)\bigr]
      +(a_\alpha-a_\star)\Ih Au(t).
    \label{eq:coefficient-residual}
\end{align}
Hence~\eqref{eq:base-operator-consistency} gives
\begin{equation}
    \normh{R_{\alpha,h}(t)}
    \le C(h^2+\delta_a).
    \label{eq:coefficient-residual-bound}
\end{equation}
Applying variation of constants to~\eqref{eq:coefficient-restricted-exact}
and freezing $H$ at $t_n$ gives the same relation
as~\eqref{eq:exact-etd-relation}, with
$\Lop_{\kappa,h}$ replaced by $\Lop_{\alpha,\kappa,h}$.  Semigroup
contractivity, the time-increment estimate,
and~\eqref{eq:coefficient-residual-bound} yield the local bound
\begin{equation}
    \normh{\xi_{\alpha,h}^{n+1}}
    \le C\tau(\tau+h^2+\delta_a).
    \label{eq:coefficient-local-defect}
\end{equation}
Subtracting~\eqref{eq:joint-coefficient-etd1}, using
Lemma~\ref{lem:nonlinear-error}, and noting that
$\norm{\tau\phiOne(\tau\Lop_{\alpha,\kappa,h})}_{h\to h}\le\tau$, we obtain
\begin{equation}
    \normh{e_h^{n+1}}
    \le(1+C\tau)\normh{e_h^n}
      +C\tau(\tau+h^2+\delta_F+\delta_a).
\end{equation}
The discrete Gronwall inequality
proves~\eqref{eq:coefficient-force-error-bound}.
\end{proof}

\begin{remark}[From joint training to rollout]
Theorem~\ref{thm:joint-response-error} controls $\delta_a$ and the empirical
force error under Assumption~\ref{ass:joint-identifiability}.  If the sampled
force error is extended to the rollout interval by the fill-distance argument
of Corollary~\ref{cor:uniform-force-from-training}, then
Theorem~\ref{thm:coefficient-force-error} closes the joint chain
\[
    \text{state-pair residual}
    \longrightarrow (\delta_a,\delta_F)
    \longrightarrow \text{rollout error}.
\]
This implication is conditional on joint identifiability and on the rollout
remaining inside the data-supported interval.
\end{remark}




\section{Numerical experiments for data-driven potential identification}
\label{sec:numerical-experiments}

This section evaluates the fixed-coefficient specialization of the proposed
framework: the Laplacian coefficient is supplied to the ETD response, whereas
the bulk potential and its derivative are hidden from the training procedure.
The joint coefficient--energy formulation of
Section~\ref{sec:joint-coefficient-energy} uses the same state-pair data but is
not assigned synthetic recovery values in the present tables; those tables
therefore validate potential identification, not joint coefficient
identification.  The objectives are fourfold.  First, we examine whether
the unknown potential can be identified from short-time state transitions
without using force or energy labels.  Second, we test whether the identified
potential can be reused for unseen initial conditions and substantially longer
prediction horizons.  Third, we investigate whether the combination of the
learned potential and a curvature-informed stabilized ETD scheme retains the
maximum-bound principle (MBP) and dissipates the unmodified learned energy.
Finally, we study reproducibility over five random seeds, data efficiency,
noise robustness, time-step transfer, and a non-polynomial logarithmic
potential.

It is important to distinguish the present learning task from black-box
solution prediction.  The neural network does not approximate the full
space--time solution map $(x,t)\mapsto u(x,t)$.  Instead, it approximates the
scalar constitutive function $u\mapsto F_\theta(u)$.  The known spatial
operator enters both the response-space training objective and the subsequent
ETD evolution. Theorems~\ref{thm:response-to-empirical-force}
and~\ref{thm:error} quantify these two stages, while
Corollary~\ref{cor:data-to-rollout} links the achieved response residual to
the rollout error.  Consequently, one identified potential can be reused for
different initial conditions, grids, time steps, and final times without
retraining, provided that the deployed states remain in the data-supported
interval.

\subsection{Model problem and state-only learning}
\label{subsec:learning-formulation}

We first consider the two-dimensional Allen--Cahn equation on the periodic
domain $\Omega=(0,2\pi)^2$,
\begin{equation}
  u_t=\varepsilon^2\Delta u-F'(u),
  \qquad x\in\Omega,\quad 0<t\le T,
  \label{eq:ac-model-experiment}
\end{equation}
where the exact double-well potential
\begin{equation}
  F(u)=\frac14(u^2-1)^2.
  \label{eq:ac-exact-potential}
\end{equation}
is used only to generate synthetic observations and reference solutions.  In
particular, neither $F$, $F'$, nor samples of the nonlinear force
$N(u)=-F'(u)$ are supplied as training labels.

Let $\Lop_{0,h}=-\varepsilon^2\Delta_h$, where $\Delta_h$ is the periodic
second-order finite-difference Laplacian, and define
\begin{equation}
  S_0(\tau)=e^{-\tau\Lop_{0,h}},
  \qquad
  B_0(\tau)=\tau\phiOne(\tau\Lop_{0,h})
  =\Lop_{0,h}^{-1}\bigl(I-S_0(\tau)\bigr),
  \label{eq:unstabilized-etd-operators}
\end{equation}
where the final expression is interpreted by continuity on the zero Fourier
mode. Given $N_{\rm pair}$ state pairs
$\{(U_j,V_j)\}_{j=1}^{N_{\rm pair}}$ with $V_j\approx u(t_j+\tau)$ and
$U_j=u(t_j)$, the known linear contribution is removed to form the discrete
Duhamel target
\begin{equation}
  D_j=V_j-S_0(\tau)U_j.
  \label{eq:duhamel-target}
\end{equation}
The neural potential induces the nonlinear force
$N_\theta(u)=-F_\theta'(u)$.  It is trained through
\begin{equation}
\begin{split}
  \mathcal J_{\rm train}(\theta)
  ={}&\frac{1}{N_{\rm pair}\sigma_D^2}
  \sum_{j=1}^{N_{\rm pair}}
  \left\|B_0(\tau)N_\theta(U_j)-D_j\right\|_h^2 \\
  &+\frac{0.1}{N_{\rm pair}\sigma_V^2}
  \sum_{j=1}^{N_{\rm pair}}
  \left\|S_0(\tau)U_j+B_0(\tau)N_\theta(U_j)-V_j\right\|_h^2,
  \label{eq:state-only-loss}
\end{split}
\end{equation}
where $\sigma_D$ and $\sigma_V$ are the root-mean-square scales of the
Duhamel targets and next states, respectively.  The normalization prevents
the smaller nonlinear Duhamel contribution from being overwhelmed by the
state magnitude.  Since
$S_0(\tau)U_j+B_0(\tau)N_\theta(U_j)-V_j
=B_0(\tau)N_\theta(U_j)-D_j$, the two terms are differently normalized
copies of the same response residual.  The second term is retained to report
the implemented objective, but it supplies no independent identifying
information beyond the first term.

\subsection{A minimally constrained general potential}
\label{subsec:endpoint-tanh}

To avoid encoding the exact quartic double-well form, we use a general tanh
multilayer perceptron $R_\theta$.  Only the known invariant interval
$[-\beta,\beta]$ and the stationarity of its endpoints are imposed.  Let
\begin{equation}
  s_-=R_\theta'(-\beta),\qquad
  s_+=R_\theta'(\beta),\qquad
  c_1=\frac{s_++s_-}{2},\qquad
  c_2=\frac{s_+-s_-}{4\beta}.
  \label{eq:endpoint-correction-coefficients}
\end{equation}
We define
\begin{equation}
  F_\theta(u)=R_\theta(u)-R_\theta(0)-c_1u-c_2u^2.
  \label{eq:endpoint-tanh-potential}
\end{equation}
Then $F_\theta(0)=0$ fixes the otherwise undetermined additive gauge, and
\begin{equation}
  F_\theta'(-\beta)=F_\theta'(\beta)=0.
  \label{eq:endpoint-stationarity}
\end{equation}
holds by construction.  No symmetry, positivity, polynomial degree, quartic
factorization, or equality of well depths is imposed.  Thus, the network is
substantially more general than the factorized ansatz
$(u^2-\beta^2)^2q_\theta(u)$ used in the preliminary ablation.

After training, let $\mathcal Q_\beta$ be a dense grid on
$[-\beta,\beta]$.  We select the stabilization parameter from the sampled
learned curvature according to
\begin{equation}
  \widehat L_{\theta,\beta}^{+}
  :=\max_{z\in\mathcal Q_\beta}F_\theta''(z),
  \qquad
  \kappa=\max\{0,\widehat L_{\theta,\beta}^{+}\}+\delta_\kappa,
  \qquad \delta_\kappa>0,
  \label{eq:kappa-selection}
\end{equation}
This sampled maximum is an empirical candidate for the upper curvature bound
in~\eqref{eq:kappa-condition}; a rigorous invocation of the theorems requires
certifying the bound on the whole interval.  With
\begin{equation}
  \Lop_{\kappa,h}=\Lop_{0,h}+\kappa I,
  \qquad
  H_{\kappa,\theta}(u)=\kappa u-F_\theta'(u),
  \label{eq:stabilized-splitting}
\end{equation}
the learned dynamics are advanced by
\begin{equation}
  u_h^{n+1}
  =e^{-\tau\Lop_{\kappa,h}}u_h^n
  +\tau\phiOne(\tau\Lop_{\kappa,h})
  H_{\kappa,\theta}(u_h^n).
  \label{eq:learned-stabilized-etd1}
\end{equation}
If the candidate $\kappa$ bounds $F_\theta''$ throughout
$[-\beta,\beta]$, then $H_{\kappa,\theta}$ is monotone and maps the two
endpoints according to $\pm\beta\mapsto\pm\kappa\beta$.  The theory then gives
the MBP and direct dissipation of the unmodified learned energy
\begin{equation}
  \E_{\theta,h}[u]
  =\frac12\innerh{u}{\Lop_{0,h}u}
  +\innerh{F_\theta(u)}{1},
  \label{eq:learned-original-energy}
\end{equation}
without an SAV or GSAV auxiliary variable.  In the experiments we also check
the discrete inequality
\begin{equation}
\begin{split}
  \E_{\theta,h}[u_h^{n+1}]-\E_{\theta,h}[u_h^n]
  &+\tau\innerh{\dt u_h^{n+1}}
  {\Mfun(\tau\Lop_{\kappa,h})\dt u_h^{n+1}} \\
  &+\frac12\innerh{u_h^{n+1}-u_h^n}
  {\Lop_{\kappa,h}(u_h^{n+1}-u_h^n)}\le0,
  \label{eq:checked-energy-inequality}
\end{split}
\end{equation}
where $\dt u_h^{n+1}=(u_h^{n+1}-u_h^n)/\tau$ and $\Mfun$ is defined
in~\eqref{eq:M-function}. The experiments report both the maximum bound and
the residual of this inequality directly.

\subsection{Experimental setup and error metrics}
\label{subsec:experimental-configuration}

The training trajectories contain low- and high-frequency Fourier fields,
droplets, and stripes.  Their amplitudes are selected from
$\{0.35,0.65,0.85,0.98\}\beta$, so the observations cover both the interior
of the admissible interval and states close to its boundary.  Unless stated
otherwise, reference state pairs for the Allen--Cahn equation are generated
by RK4 with multiple internal substeps over each observation interval.  The
exact formulas in~\eqref{eq:ac-exact-potential} are used only for reference
generation and post-training diagnostics.

For states and scalar constitutive functions, we report the relative discrete
$L^2$ error
\begin{equation}
  e_{\rm rel}(v,w)=\frac{\|v-w\|_2}{\|w\|_2+10^{-14}}.
  \label{eq:relative-error-metric}
\end{equation}
Potential errors are computed after aligning the additive gauge at $u=0$.
The principal configurations are summarized in
Table~\ref{tab:experiment-configurations}.

\begin{table}[htbp]
  \centering
  \caption{Principal experimental configurations.  The test horizons exceed
  the training horizons in every long-time experiment.}
  \label{tab:experiment-configurations}
  \small
  \resizebox{\linewidth}{!}{%
  \begin{tabular}{lcccccc}
    \toprule
    Experiment & $N^2$ & $\varepsilon$ & $\tau$ & $T_{\rm train}$
    & Training data & $T_{\rm test}$ \\
    \midrule
    Potential identification & $48^2$ & $0.1$ & $0.02$ & $1.2$
      & $20$ trajectories, $1200$ pairs & $4,10,20,50$ \\
    Endpoint-constrained structure test & $48^2$ & $0.1$ & $0.02$ & $1.2$
      & $20$ trajectories, $1200$ pairs & $4$ \\
    Five-seed benchmark & $24^2$ & $0.1$ & $0.02$ & $0.8$
      & $160$, $320$, or $640$ pairs & $2$ \\
    Flory--Huggins test & $48^2$ & $0.08$ & $0.02$ & $1.2$
      & $20$ trajectories, $1200$ pairs & $20$ \\
    \bottomrule
  \end{tabular}}
\end{table}

The main $48^2$ Allen--Cahn networks have three hidden tanh layers of width
$64$ and are trained for $200$ epochs with Adam.  The five-seed study uses
three hidden layers of width $32$, $80$ epochs, and the seeds
$\{0,1,2,3,4\}$.  Validation trajectories are independent of the training
trajectories.

\FloatBarrier

\subsection{Recovery of the potential and nonlinear force}
\label{subsec:potential-recovery}

We first use a gauge-fixed but otherwise unconstrained tanh potential network
to isolate the benefit of learning an energy primitive rather than a direct
state-to-state map.  Over the state interval visited by the training data, the
identified potential, force, and curvature attain the errors listed in
Table~\ref{tab:identification-errors}.  The recovery of $F''$ is the most
sensitive diagnostic because it differentiates the learned network twice;
nevertheless, its relative error remains approximately $1.25\%$.

\begin{table}[htbp]
  \centering
  \caption{Constitutive-function identification errors on the observed state
  support $[-0.99805,0.99806]$.}
  \label{tab:identification-errors}
  \begin{tabular}{lc}
    \toprule
    Quantity & Relative $L^2$ error \\
    \midrule
    Potential $F_\theta$ & $1.7583\times10^{-3}$ \\
    Nonlinear force $-F_\theta'$ & $4.3604\times10^{-3}$ \\
    Curvature $F_\theta''$ & $1.2501\times10^{-2}$ \\
    \bottomrule
  \end{tabular}
\end{table}

Figure~\ref{fig:recovered-potential-force} shows that the network recovers not
only the force governing the evolution, but also an interpretable bulk energy
density.  This distinction is useful for selecting $\kappa$ and for monitoring
the learned energy during long-time integration.

\begin{figure}[htbp]
  \centering
  \includegraphics[width=\linewidth]{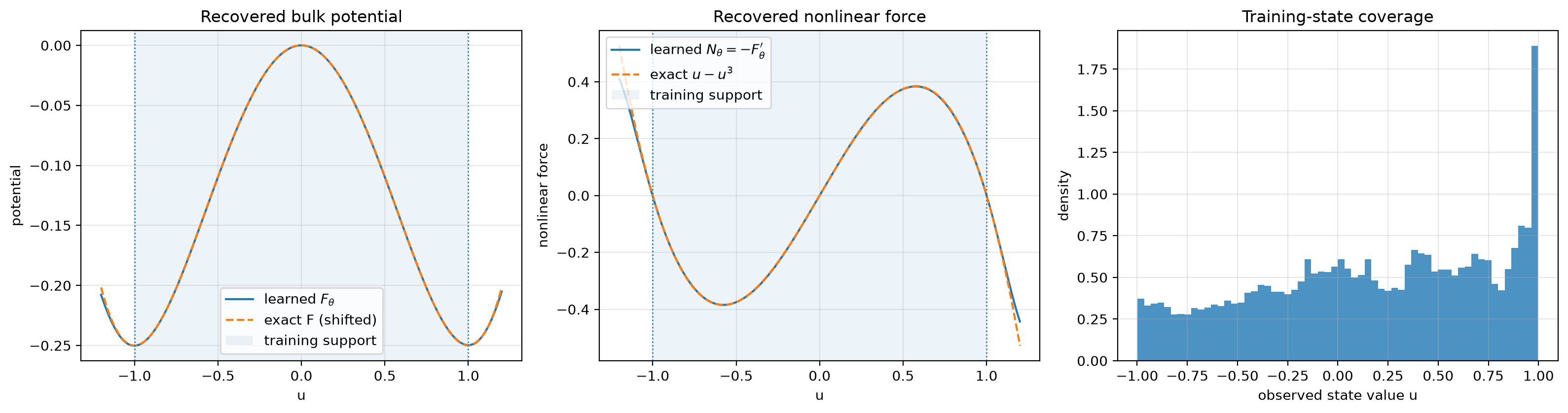}
  \caption{Recovered Allen--Cahn bulk potential and nonlinear force, together
  with the empirical distribution of observed state values.  The exact
  curves are used only for post-training diagnostics.}
  \label{fig:recovered-potential-force}
\end{figure}

For one representative unseen trajectory at $T=4$, the ETD1 method using the
exact nonlinear force has final error $1.3304\times10^{-3}$, while replacing
the force by $-F_\theta'$ gives $1.8493\times10^{-3}$.  Thus, the learned
potential adds only a moderate error to the temporal and spatial
discretization error of the oracle ETD1 method.

\FloatBarrier

\subsection{Unseen initial conditions and long-time prediction}
\label{subsec:long-time-prediction}

The learned potential is next frozen and tested on $12$ initial conditions
that are not used during training.  These tests mix high-frequency Fourier,
droplet, and stripe configurations.  At $T=4$, the mean rollout error is
$1.1974\times10^{-3}$ with standard deviation $4.4248\times10^{-4}$; see
Table~\ref{tab:unseen-initial-conditions}.  The corresponding force error on
the visited states is approximately $0.63\%$.

\begin{table}[htbp]
  \centering
  \caption{Results over $12$ unseen initial conditions at $T=4$.  Values are
  the mean $\pm$ standard deviation.}
  \label{tab:unseen-initial-conditions}
  \begin{tabular}{lc}
    \toprule
    Diagnostic & Mean $\pm$ standard deviation \\
    \midrule
    Learned-force error &
      $6.3227\times10^{-3}\ \pm\ 2.4020\times10^{-3}$ \\
    Learned-potential ETD1 rollout error &
      $1.1974\times10^{-3}\ \pm\ 4.4248\times10^{-4}$ \\
    Exact-force ETD1 rollout error &
      $5.3344\times10^{-4}\ \pm\ 6.5449\times10^{-4}$ \\
    \bottomrule
  \end{tabular}
\end{table}

To evaluate temporal extrapolation, we train only on $0\le t\le1.2$ and
continue the learned dynamics without retraining to $T=10$, $20$, and $50$.
The final errors are reported in Table~\ref{tab:long-time-errors}.  Although
the learned-model error accumulates with time, the solution remains stable and
the relative error is below $0.8\%$ even at $T=50$, more than forty times the
training horizon.

\begin{table}[htbp]
  \centering
  \caption{Long-time extrapolation from a training horizon of $T_{\rm
  train}=1.2$.  Exact-force ETD1 is an oracle discretization baseline.}
  \label{tab:long-time-errors}
  \begin{tabular}{ccc}
    \toprule
    Final time & Learned-potential ETD1 & Exact-force ETD1 \\
    \midrule
    $10$ & $1.5400\times10^{-3}$ & $4.4308\times10^{-4}$ \\
    $20$ & $2.9377\times10^{-3}$ & $9.5322\times10^{-4}$ \\
    $50$ & $7.6535\times10^{-3}$ & $2.7044\times10^{-3}$ \\
    \bottomrule
  \end{tabular}
\end{table}

\begin{figure}[htbp]
  \centering
  \includegraphics[width=\linewidth]{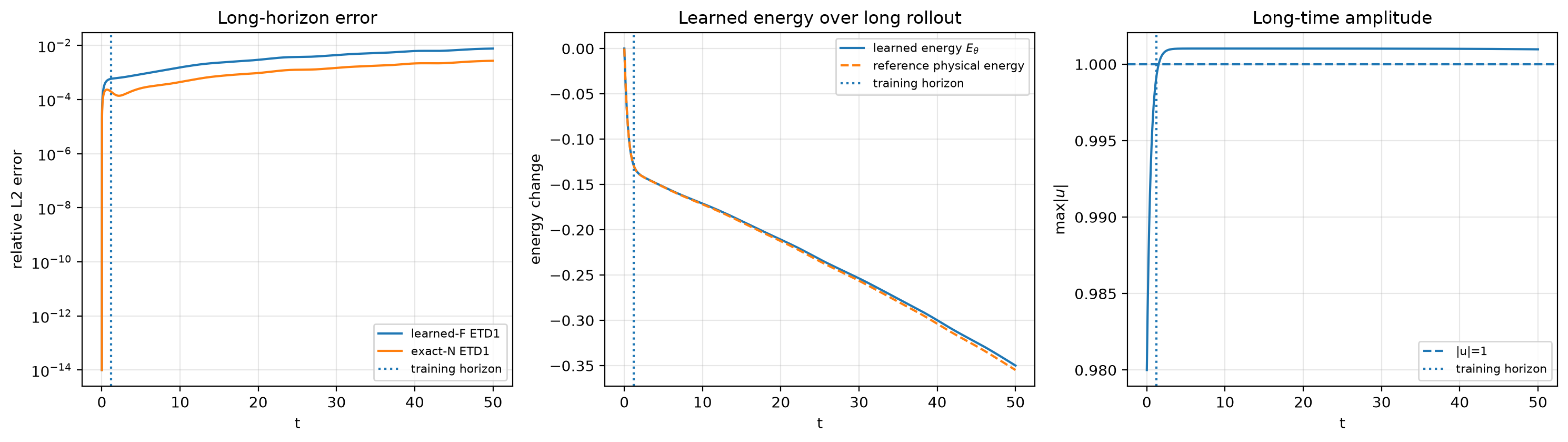}
  \caption{Error evolution and energy diagnostics in the long-time
  extrapolation experiment.  The same learned potential is used at every
  final time.}
  \label{fig:long-time-extrapolation}
\end{figure}

Figure~\ref{fig:long-time-phase-fields} gives the corresponding phase-field
snapshots.  The learned evolution correctly reproduces the slow shrinkage of
the circular inclusion and its interface location up to $T=50$.  The visual
agreement is consistent with the quantitative errors in
Table~\ref{tab:long-time-errors}; in particular, the long-time prediction does
not develop spurious oscillations or an incorrect phase topology.

\begin{figure}[htbp]
  \centering
  \includegraphics[width=\linewidth]{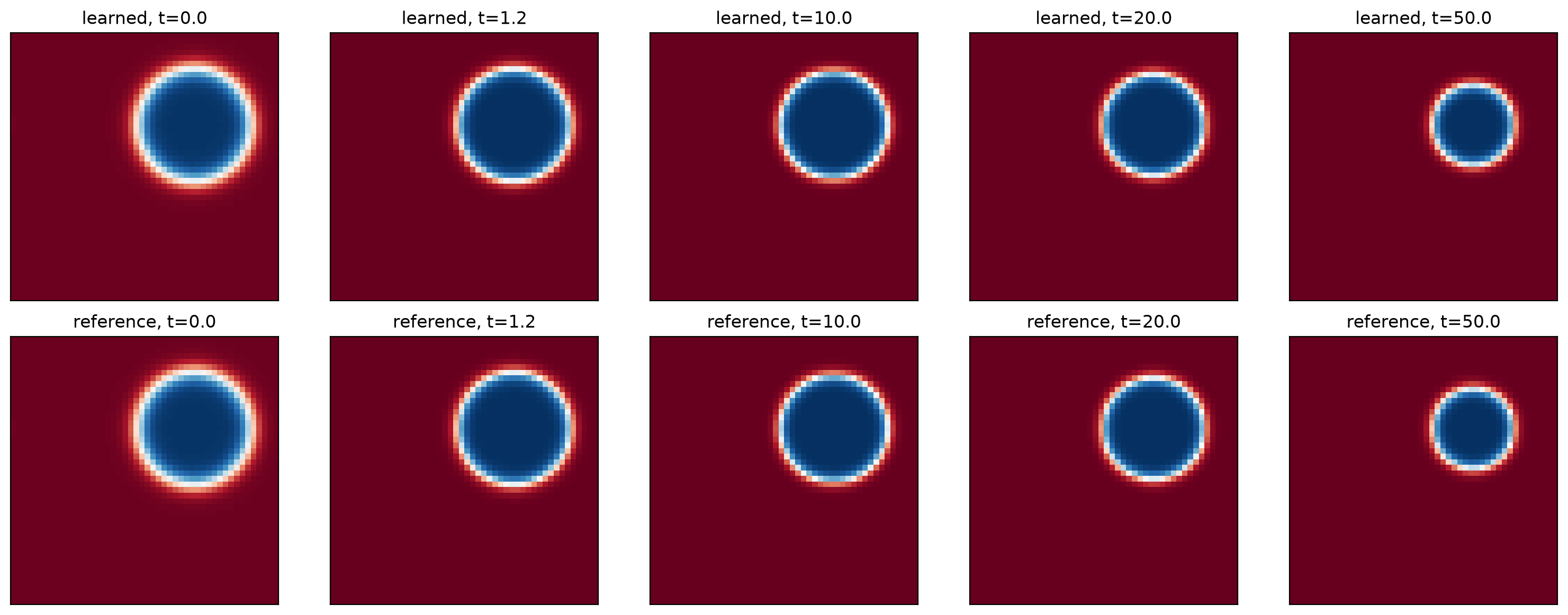}
  \caption{Allen--Cahn phase-field evolution far beyond the training horizon.
  Top: learned-potential ETD1.  Bottom: the fine-step reference solution.  The
  potential is trained only on $0\le t\le1.2$ and is frozen during the
  rollout to $T=50$.}
  \label{fig:long-time-phase-fields}
\end{figure}

\FloatBarrier

\subsection{Comparison with matched-size learned representations}
\label{subsec:baseline-comparison}

We compare three learned representations trained from the same state pairs:
(i) the potential network $F_\theta$, (ii) a direct-force network that learns
$N_\theta(u)$ without an energy primitive, and (iii) a black-box residual FNO
that learns the full state increment.  Their parameter counts are $8513$,
$8513$, and $8529$, respectively.  The exact-force ETD1 method is also
reported as an oracle reference.  The first two representations retain the
known ETD update and learn only its constitutive input, whereas the
residual FNO directly learns a full-field state increment.  The potential
model further constrains that constitutive input to be the derivative of a
scalar energy.  Table~\ref{tab:matched-baselines} shows the mean error over the
same $12$ unseen initial conditions.

For the time-step-transfer tests, the residual FNO uses the explicit
time-rescaling convention
\begin{equation}
    U^{n+1}=U^n+\frac{\tau}{\tau_{\rm train}}
    \mathcal R_{\vartheta}^{\rm FNO}(U^n),
    \label{eq:time-rescaled-fno}
\end{equation}
where $\mathcal R_{\vartheta}^{\rm FNO}$ is trained at
$\tau_{\rm train}=0.02$. This convention changes the increment magnitude but
does not retrain the network.

\begin{table}[htbp]
  \centering
  \caption{Matched-parameter comparison at $T=4$.  One residual-FNO rollout
  failed the finite-value stability check.}
  \label{tab:matched-baselines}
  \begin{tabular}{lccc}
    \toprule
    Method & Parameters & Final relative error & Failures \\
    \midrule
    Exact-force ETD1 & -- &
      $9.2145\times10^{-4}\pm1.1409\times10^{-3}$ & $0/12$ \\
    Potential ETD1 & $8513$ &
      $1.4776\times10^{-3}\pm7.4390\times10^{-4}$ & $0/12$ \\
    Direct-force ETD1 & $8513$ &
      $3.5970\times10^{-3}\pm1.4993\times10^{-3}$ & $0/12$ \\
    Residual FNO & $8529$ &
      $4.1698\times10^{-1}\pm4.6460\times10^{-1}$ & $1/12$ \\
    \bottomrule
  \end{tabular}
\end{table}

In this experiment, learning the potential reduces the mean error of the
direct-force model by approximately $59\%$.  It is also substantially more
accurate and robust than the matched-size black-box residual FNO baseline. The result
supports the central design choice: learning only the missing scalar
constitutive mechanism is more data-efficient than relearning the known
spatial evolution.  The comparison is specific to the present data,
architectures, and parameter budget and is not intended as a universal claim
against all neural-operator variants.

\begin{figure}[htbp]
  \centering
  \includegraphics[width=0.72\linewidth]{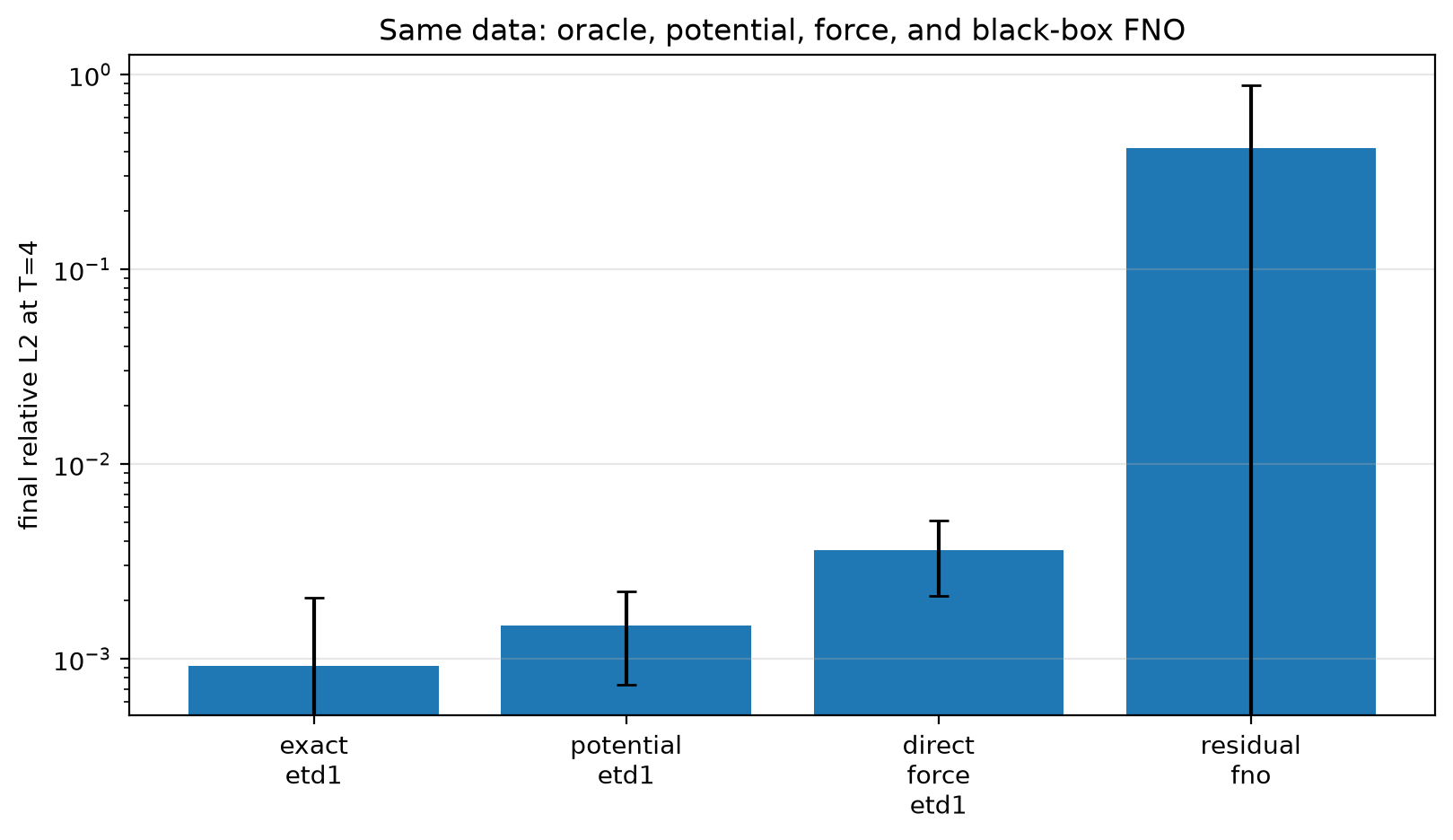}
  \caption{Matched-parameter comparison of the oracle ETD1 method, the
  potential representation, direct-force learning, and a residual FNO.}
  \label{fig:matched-baselines}
\end{figure}

\FloatBarrier

\subsection{Transfer without retraining}
\label{subsec:transfer-results}

The scalar potential does not explicitly depend on the spatial grid or time
step.  We therefore freeze all learned weights and vary one numerical or
physical parameter at a time.  Only the known ETD multipliers are rebuilt.
Table~\ref{tab:no-retraining-transfer} reports the mean final error at $T=2$.

\begin{table}[htbp]
  \centering
  \caption{No-retraining transfer.  The training configuration is
  $(N,\varepsilon,\tau)=(48,0.1,0.02)$.  ``FNO'' denotes the time-rescaled
  residual-FNO baseline.}
  \label{tab:no-retraining-transfer}
  \small
  \begin{tabular}{ccccc}
    \toprule
    Changed quantity & Value & Potential ETD1 & Direct-force ETD1 & FNO \\
    \midrule
    $\tau$ & $0.01$ & $6.4489\times10^{-4}$ & $1.7997\times10^{-3}$ & $6.4718\times10^{-2}$ \\
    $\tau$ & $0.02$ & $9.8169\times10^{-4}$ & $1.8818\times10^{-3}$ & $6.4580\times10^{-2}$ \\
    $\tau$ & $0.04$ & $1.9400\times10^{-3}$ & $2.5733\times10^{-3}$ & $6.4327\times10^{-2}$ \\
    $\tau$ & $0.08$ & $3.9507\times10^{-3}$ & $4.4450\times10^{-3}$ & $6.3915\times10^{-2}$ \\
    \midrule
    $N$ & $32$ & $9.7633\times10^{-4}$ & $1.8660\times10^{-3}$ & $6.4404\times10^{-2}$ \\
    $N$ & $48$ & $9.8169\times10^{-4}$ & $1.8818\times10^{-3}$ & $6.4580\times10^{-2}$ \\
    $N$ & $64$ & $9.7430\times10^{-4}$ & $1.8775\times10^{-3}$ & $6.5361\times10^{-2}$ \\
    $N$ & $96$ & $9.7310\times10^{-4}$ & $1.8803\times10^{-3}$ & $6.6053\times10^{-2}$ \\
    \midrule
    $\varepsilon$ & $0.05$ & $9.8149\times10^{-4}$ & $1.8593\times10^{-3}$ & $9.4197\times10^{-2}$ \\
    $\varepsilon$ & $0.10$ & $9.8169\times10^{-4}$ & $1.8818\times10^{-3}$ & $6.4580\times10^{-2}$ \\
    $\varepsilon$ & $0.20$ & $8.8985\times10^{-4}$ & $1.7899\times10^{-3}$ & $1.4879\times10^{-1}$ \\
    \bottomrule
  \end{tabular}
\end{table}

The nearly unchanged error over $N=32,48,64,96$ supports grid
portability: the network represents a pointwise constitutive law, while grid
dependence remains in the known discrete operator.  The method also transfers
over the tested range $\varepsilon\in[0.05,0.2]$.  As expected for a
first-order time discretization, the error increases as the deployment time
step grows.  Nevertheless, even at $\tau=0.08$, four times the training time
step, the potential method remains accurate without retraining.

\begin{figure}[htbp]
  \centering
  \includegraphics[width=\linewidth]{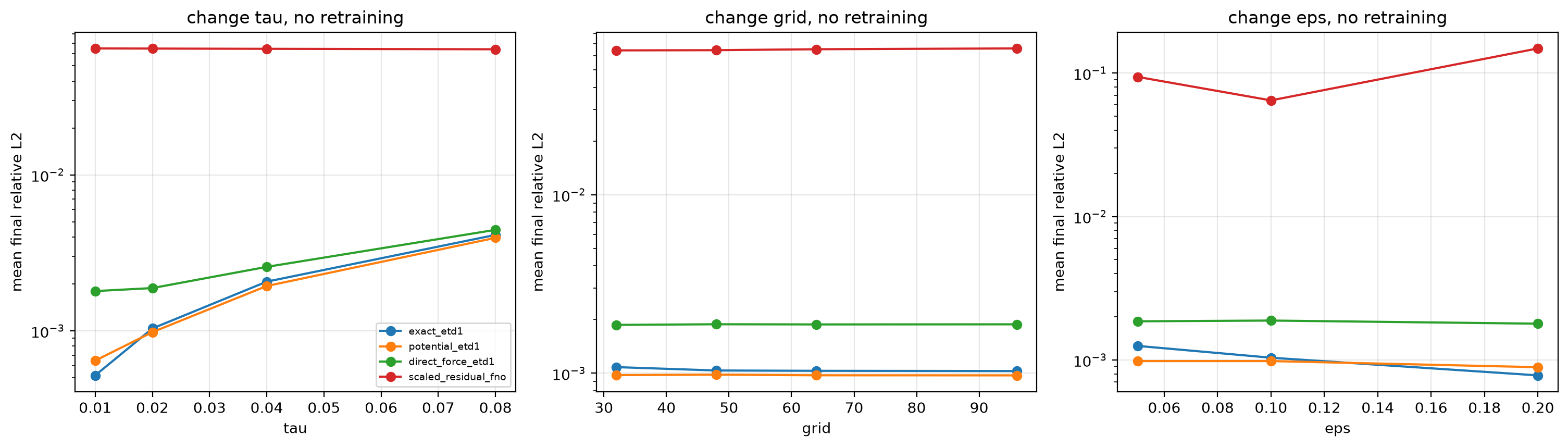}
  \caption{Transfer across time-step sizes, spatial grids, and values of the
  supplied diffusion parameter. Network weights are frozen in every test.}
  \label{fig:no-retraining-transfer}
\end{figure}

\FloatBarrier

\subsection{MBP and unmodified learned-energy dissipation}
\label{subsec:structure-results}

We now use the minimally constrained
architecture~\eqref{eq:endpoint-tanh-potential}. With $\beta=1$, the learned maximum
curvature is $1.938891$, giving $\kappa=1.988891$ after adding a safety margin.
The endpoint derivative errors are
$|F_\theta'(-1)|=|F_\theta'(1)|=2.3842\times10^{-7}$, which is at the
single-precision automatic-differentiation level.  The potential and force
errors are $1.7461\times10^{-3}$ and $5.0619\times10^{-3}$, respectively.

At $T=4$, the final state error is $1.1893\times10^{-3}$ and the maximum
solution magnitude is $0.993598<\beta$.  The largest measured energy increase
is $5.1870\times10^{-7}$, while the largest residual
in~\eqref{eq:checked-energy-inequality} is $1.3286\times10^{-6}$. Both are below
the verification tolerance $2\times10^{-6}$ used for float32 neural
evaluation.  These small positive values are reported rather than clipped to
zero.

\begin{figure}[htbp]
  \centering
  \includegraphics[width=0.92\linewidth]{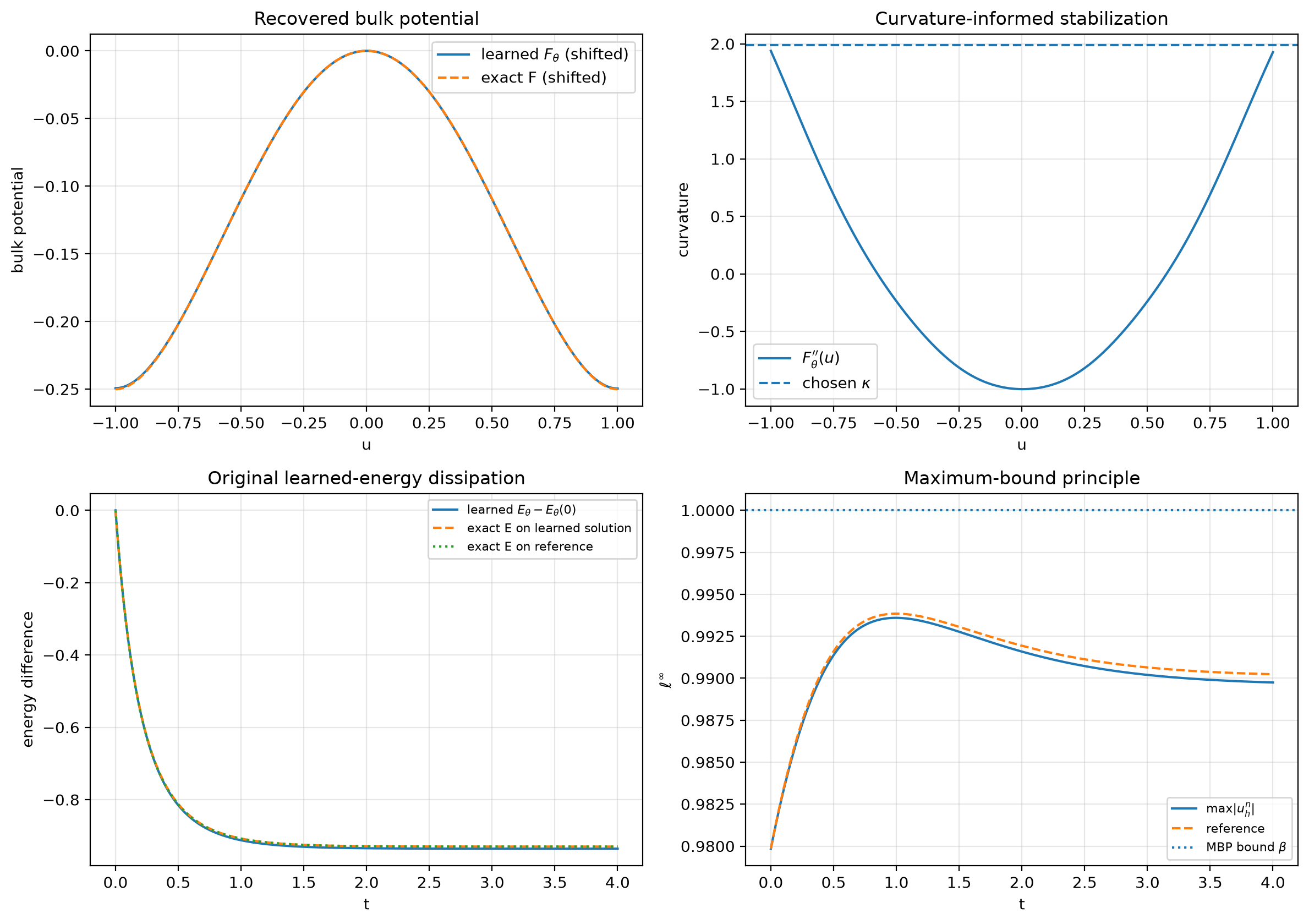}
  \caption{Diagnostics for the endpoint-constrained general tanh potential:
  recovered potential, learned curvature and stabilization, solution error,
  unmodified learned-energy dissipation, MBP, and the checked energy-inequality
  residual.}
  \label{fig:endpoint-tanh-diagnostics}
\end{figure}

The phase-field comparison in Figure~\ref{fig:endpoint-tanh-phase-fields}
shows that the constrained model also reproduces the interface profile and
phase amplitudes throughout the rollout.  The training endpoint $T=1.2$ is
included explicitly; the later snapshots at $T=2$ and $T=4$ therefore test
temporal extrapolation rather than interpolation within the observation
window.

\begin{figure}[htbp]
  \centering
  \includegraphics[width=\linewidth]{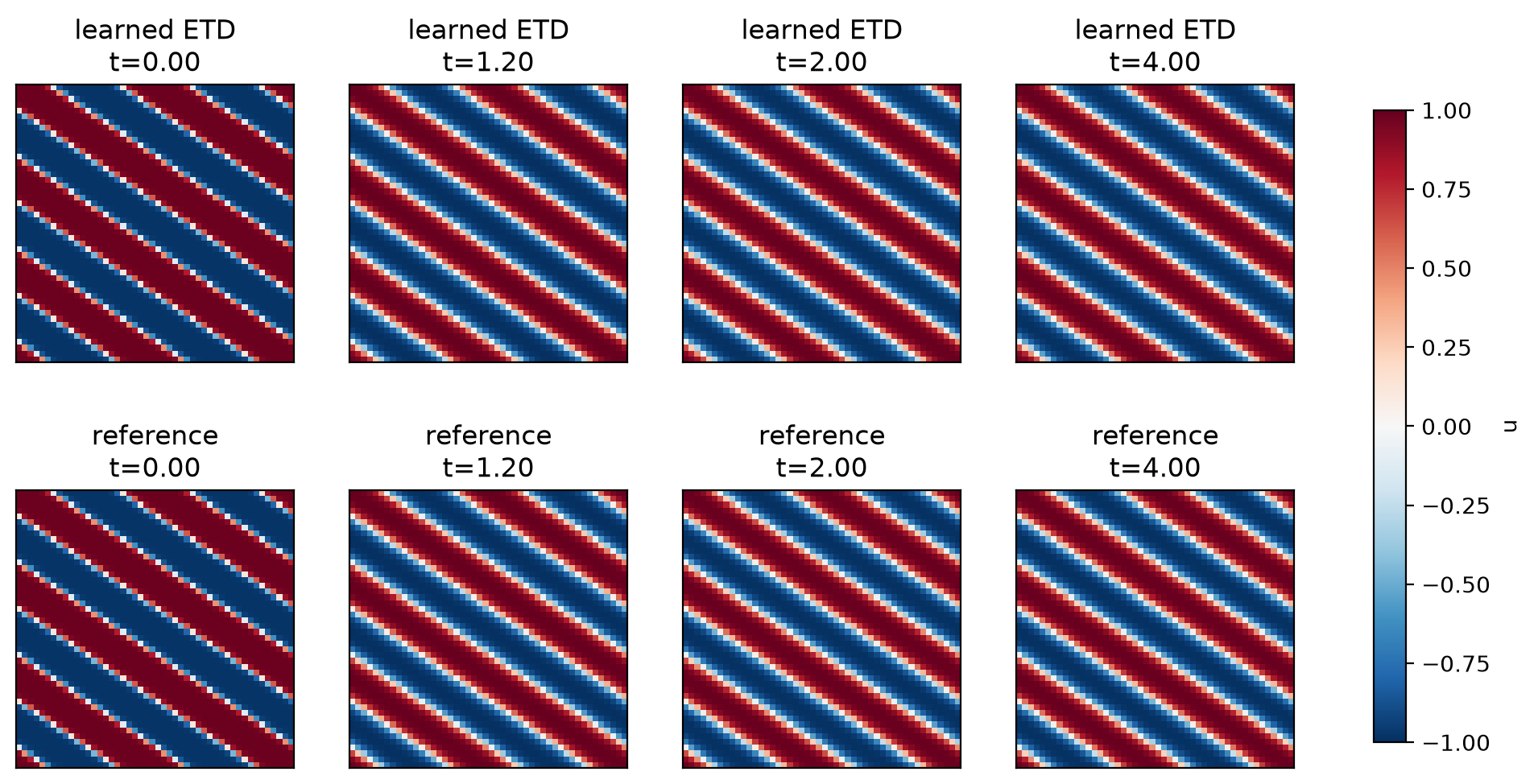}
  \caption{Phase-field comparison for the endpoint-constrained general tanh
  potential.  Top: learned stabilized ETD1.  Bottom: the fine-step reference.
  The common color scale is the prescribed invariant interval $[-1,1]$.}
  \label{fig:endpoint-tanh-phase-fields}
\end{figure}

The same learned potential is reused at four deployment time steps.  As shown
in Table~\ref{tab:tau-stress-structure}, all cases remain finite, satisfy the
MBP, and pass the learned-energy checks.  This is a stress test of structural
robustness, not a temporal-convergence test.  In particular, the nonmonotone
dependence of the state error on $\tau$ may reflect cancellation between the
learned-model error and ETD1 truncation error and should not be interpreted as
higher-order accuracy.

\begin{table}[htbp]
  \centering
  \caption{Time-step stress test at $T=4$ without retraining.  All four cases
  satisfy the MBP and learned-energy monotonicity checks.}
  \label{tab:tau-stress-structure}
  \begin{tabular}{ccccc}
    \toprule
    $\tau$ & Final error & $\max_n\|u_h^n\|_\infty$
    & $\max\Delta E_{\theta,h}$ & Maximum inequality residual \\
    \midrule
    $0.02$ & $1.1893\times10^{-3}$ & $0.993598$
      & $ 5.1870\times10^{-7}$ & $ 1.3286\times10^{-6}$ \\
    $0.05$ & $1.1443\times10^{-3}$ & $0.993618$
      & $-3.5167\times10^{-7}$ & $ 7.9040\times10^{-7}$ \\
    $0.10$ & $1.0694\times10^{-3}$ & $0.993647$
      & $-2.5442\times10^{-6}$ & $ 9.1473\times10^{-7}$ \\
    $0.20$ & $9.8196\times10^{-4}$ & $0.993695$
      & $-1.3740\times10^{-5}$ & $-1.5872\times10^{-6}$ \\
    \bottomrule
  \end{tabular}
\end{table}

\FloatBarrier

\subsection{Five-seed studies of data, noise, and time steps}
\label{subsec:five-seed-study}

We next repeat the endpoint-constrained experiment with five random seeds.
The training horizon is $T_{\rm train}=0.8$ and the test horizon is
$T_{\rm test}=2$.  Table~\ref{tab:data-curve} shows a clear data-scaling
trend.  Increasing the number of state pairs from $160$ to $640$ reduces the
mean rollout error by more than one order of magnitude and reduces the force
error from approximately $7.6\%$ to $0.67\%$.

\begin{table}[htbp]
  \centering
  \caption{Data-volume study over five seeds.  Values are the mean $\pm$
  standard deviation.}
  \label{tab:data-curve}
  \begin{tabular}{ccc}
    \toprule
    State pairs & Rollout error & Learned-force error \\
    \midrule
    $160$ & $1.9304\times10^{-2}\pm4.8926\times10^{-3}$
          & $7.5896\times10^{-2}\pm7.3798\times10^{-3}$ \\
    $320$ & $4.2389\times10^{-3}\pm1.7297\times10^{-3}$
          & $1.8584\times10^{-2}\pm3.3615\times10^{-3}$ \\
    $640$ & $1.5759\times10^{-3}\pm6.0890\times10^{-4}$
          & $6.6559\times10^{-3}\pm8.8430\times10^{-4}$ \\
    \bottomrule
  \end{tabular}
\end{table}

For the noise test, independent Gaussian perturbations are added to both
members of every state pair.  The noise standard deviation is the prescribed
fraction of the root-mean-square magnitude of the corresponding state.
Table~\ref{tab:noise-curve} indicates that $0.1\%$ relative noise has only a
minor influence on the rollout error.  The error increases progressively at
$0.5\%$ and $1\%$ noise, but all tested rollouts remain stable and retain the
MBP and learned-energy monotonicity.

\begin{table}[htbp]
  \centering
  \caption{Noise robustness over five seeds using $640$ state pairs.}
  \label{tab:noise-curve}
  \begin{tabular}{ccc}
    \toprule
    Relative noise & Rollout error & Learned-force error \\
    \midrule
    $0$ & $1.5759\times10^{-3}\pm6.0890\times10^{-4}$
        & $6.6559\times10^{-3}\pm8.8430\times10^{-4}$ \\
    $0.001$ & $1.6554\times10^{-3}\pm6.8687\times10^{-4}$
        & $8.6641\times10^{-3}\pm8.5602\times10^{-4}$ \\
    $0.005$ & $3.1193\times10^{-3}\pm1.1606\times10^{-3}$
        & $1.5001\times10^{-2}\pm5.8227\times10^{-3}$ \\
    $0.010$ & $9.9331\times10^{-3}\pm6.3631\times10^{-3}$
        & $4.3842\times10^{-2}\pm2.4085\times10^{-2}$ \\
    \bottomrule
  \end{tabular}
\end{table}

Figure~\ref{fig:noise-robustness} isolates the noise study from the composite
benchmark plot.  The perturbations are applied independently to both states
in each training pair, whereas evaluation is performed on a clean, unseen
trajectory.  Thus, the reported rollout error measures how noise in the
short-time observations propagates through potential identification and the
subsequent long-time solver, rather than merely measuring reconstruction of a
noisy test field.

\begin{figure}[htbp]
  \centering
  \includegraphics[width=0.72\linewidth]{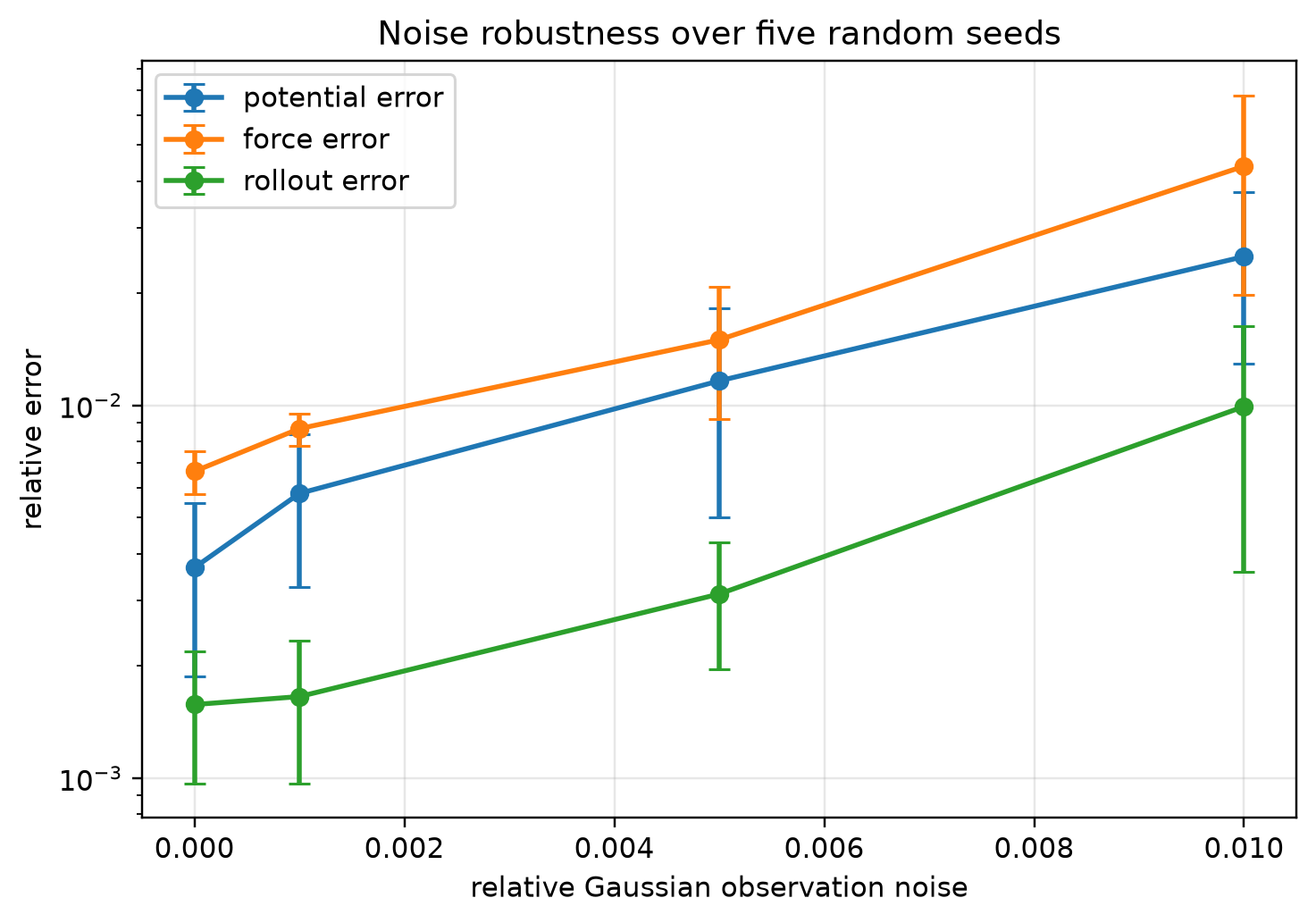}
  \caption{Robustness to independent relative Gaussian noise in the training
  state pairs.  Curves and error bars show the mean and one standard deviation
  over five random seeds.}
  \label{fig:noise-robustness}
\end{figure}

We also compare training at the single step $\tau=0.02$ with pooled training
at $\tau\in\{0.01,0.02,0.04\}$.  The total data budgets are matched
($640$ versus $639$ pairs), and every time-step group is normalized by its own
Duhamel scale.  The pooled model is less sensitive to the deployment step,
but its absolute error is larger; see Table~\ref{tab:multi-tau-transfer}.

\begin{table}[htbp]
  \centering
  \caption{Equal-budget time-step transfer over five seeds.}
  \label{tab:multi-tau-transfer}
  \begin{tabular}{ccc}
    \toprule
    Test $\tau$ & Single-$\tau$ training & Multi-$\tau$ training \\
    \midrule
    $0.01$ & $1.4364\times10^{-3}\pm5.0150\times10^{-4}$
           & $4.8233\times10^{-3}\pm1.4779\times10^{-3}$ \\
    $0.02$ & $1.5759\times10^{-3}\pm6.0890\times10^{-4}$
           & $4.8115\times10^{-3}\pm1.5081\times10^{-3}$ \\
    $0.04$ & $2.2073\times10^{-3}\pm1.0823\times10^{-3}$
           & $4.9404\times10^{-3}\pm1.6998\times10^{-3}$ \\
    $0.08$ & $3.9420\times10^{-3}\pm2.2281\times10^{-3}$
           & $5.6955\times10^{-3}\pm2.4449\times10^{-3}$ \\
    \bottomrule
  \end{tabular}
\end{table}

This negative result is informative.  Under a fixed total budget, pooling
divides the available state coverage among several step sizes.  More
importantly, ETD1 data at different $\tau$ contain different first-order
modified-equation biases, so a single continuous potential must compromise
between inconsistent finite-step targets.  A more suitable extension would
use higher-order ETD observations or Richardson-corrected multi-rate targets.
Across all records in the five-seed study, the maximum endpoint derivative
error is $3.5763\times10^{-7}$, and no MBP or learned-energy monotonicity
failure is observed in either the nominal or time-step-transfer rollouts.

\begin{figure}[htbp]
  \centering
  \includegraphics[width=\linewidth]{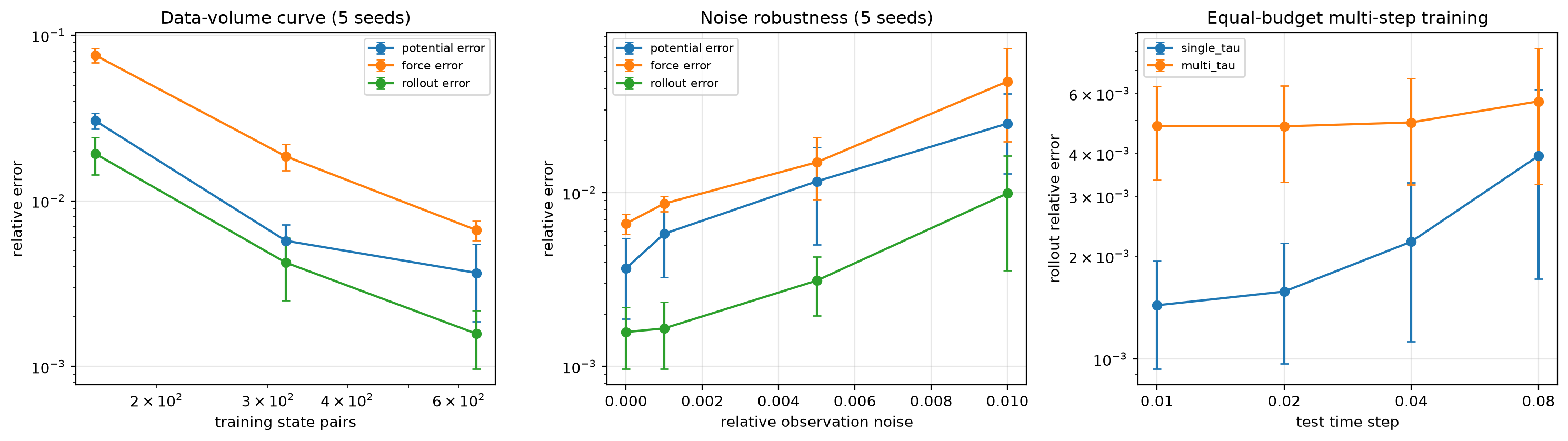}
  \caption{Five-seed benchmark: dependence on the data volume, robustness to
  relative observation noise, and single- versus multi-time-step training.
  Error bars show one standard deviation.}
  \label{fig:five-seed-benchmark}
\end{figure}

\FloatBarrier

\subsection{Non-polynomial Flory--Huggins potential}
\label{subsec:flory-huggins-results}

To test whether the framework is restricted to a polynomial double well, we
consider the logarithmic Flory--Huggins energy
\begin{equation}
  F(u)=\frac{\vartheta}{2}
  \bigl[(1+u)\log(1+u)+(1-u)\log(1-u)\bigr]
  -\frac{\vartheta_c}{2}u^2,
  \qquad |u|<1,
  \label{eq:flory-huggins-potential}
\end{equation}
with $\vartheta=1$, $\vartheta_c=1.5$, and the prescribed separation interval
$|u|\le\beta=0.9$.  Here an even smooth neural potential with hard inward
endpoint-slope signs is used.  This is a problem-specific separation prior and
is stronger than the minimal endpoint-stationarity prior
in~\eqref{eq:endpoint-tanh-potential}. The purpose of this test is to assess
non-polynomial recovery and strict separation rather than to claim that the
logarithmic singularity is learned without structural information.

Only state pairs from $0\le t\le1.2$ are used for training.  The exact
logarithmic potential and force are hidden from the loss and are used only to
generate a fine-step reference and to compute post-training errors.  The
learned model is then advanced to $T=20$ with no SAV or GSAV variable.
Table~\ref{tab:flory-results} summarizes the results.

\begin{table}[htbp]
  \centering
  \caption{Flory--Huggins state-only learning and long-time rollout.}
  \label{tab:flory-results}
  \begin{tabular}{lc}
    \toprule
    Diagnostic & Value \\
    \midrule
    Potential relative error & $3.4388\times10^{-3}$ \\
    Nonlinear-force relative error & $2.4121\times10^{-2}$ \\
    Curvature relative error & $9.4421\times10^{-2}$ \\
    Final rollout error at $T=20$ & $1.2072\times10^{-2}$ \\
    Maximum solution magnitude & $0.852836$ \\
    Distance from the singular boundary $|u|=1$ & $0.147164$ \\
    Maximum learned-energy increment & $-9.5139\times10^{-5}$ \\
    Maximum energy-inequality residual & $-3.1107\times10^{-6}$ \\
    \bottomrule
  \end{tabular}
\end{table}

The learned rollout remains strictly inside $[-0.9,0.9]$, dissipates the
unmodified learned energy, and reaches $T=20$ with a final relative error of
approximately $1.2\%$. The result indicates that the approach is not tied
to the cubic Allen--Cahn force, although curvature recovery becomes more
difficult near a logarithmic singularity.

\begin{figure}[htbp]
  \centering
  \includegraphics[width=0.95\linewidth]{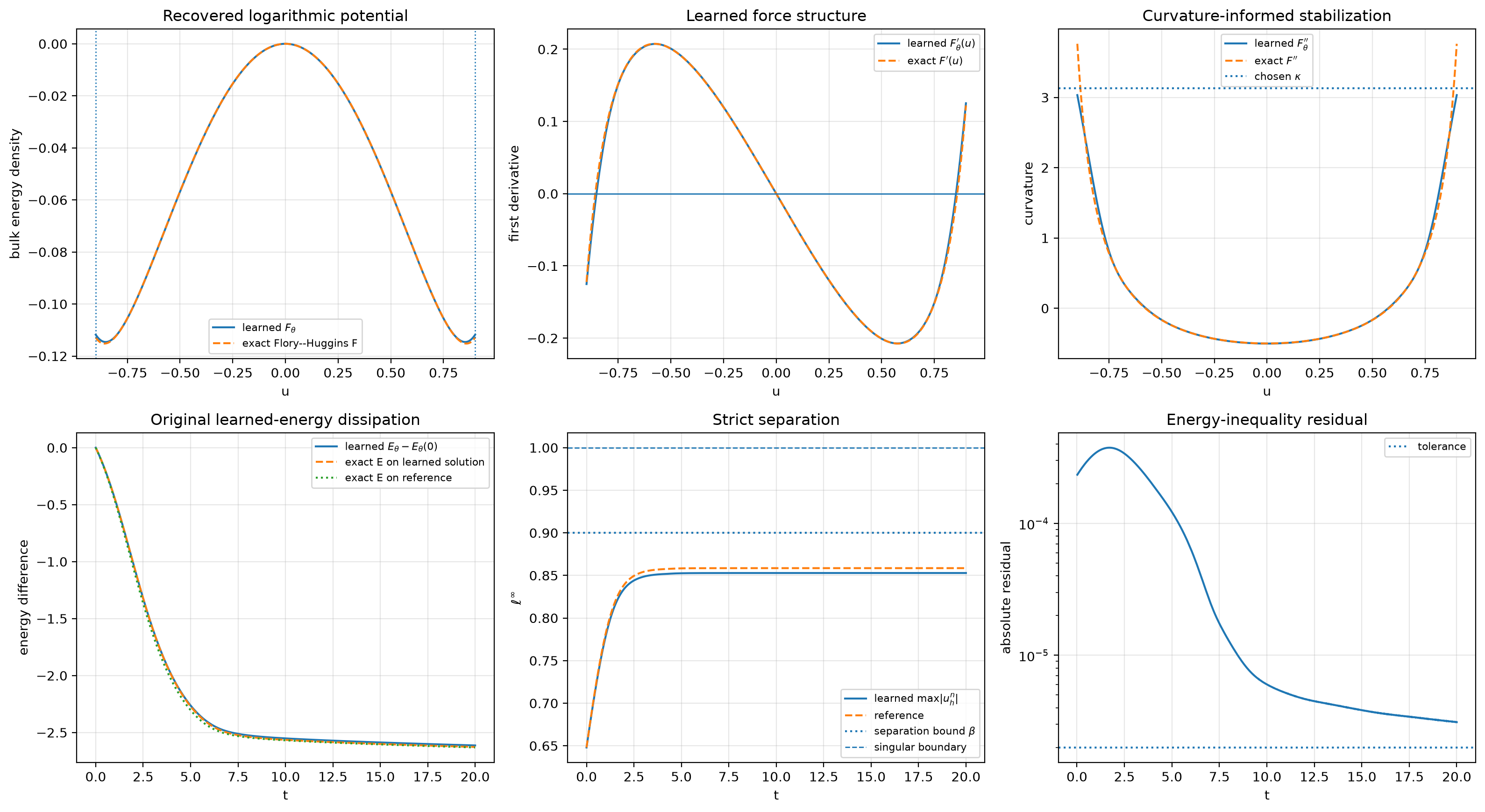}
  \caption{Flory--Huggins diagnostics: recovery of the logarithmic energy and
  force, curvature-informed stabilization, long-time state error, strict
  separation, and dissipation of the unmodified learned energy.}
  \label{fig:flory-huggins-diagnostics}
\end{figure}

Figure~\ref{fig:flory-huggins-phase-fields} further shows that the learned
model reproduces the phase-selection and coarsening pattern of the logarithmic
reference dynamics.  The agreement at $T=20$, compared with the training
horizon $T_{\rm train}=1.2$, supports the use of the identified non-polynomial
potential for long-time phase-field simulation.

\begin{figure}[htbp]
  \centering
  \includegraphics[width=0.88\linewidth]{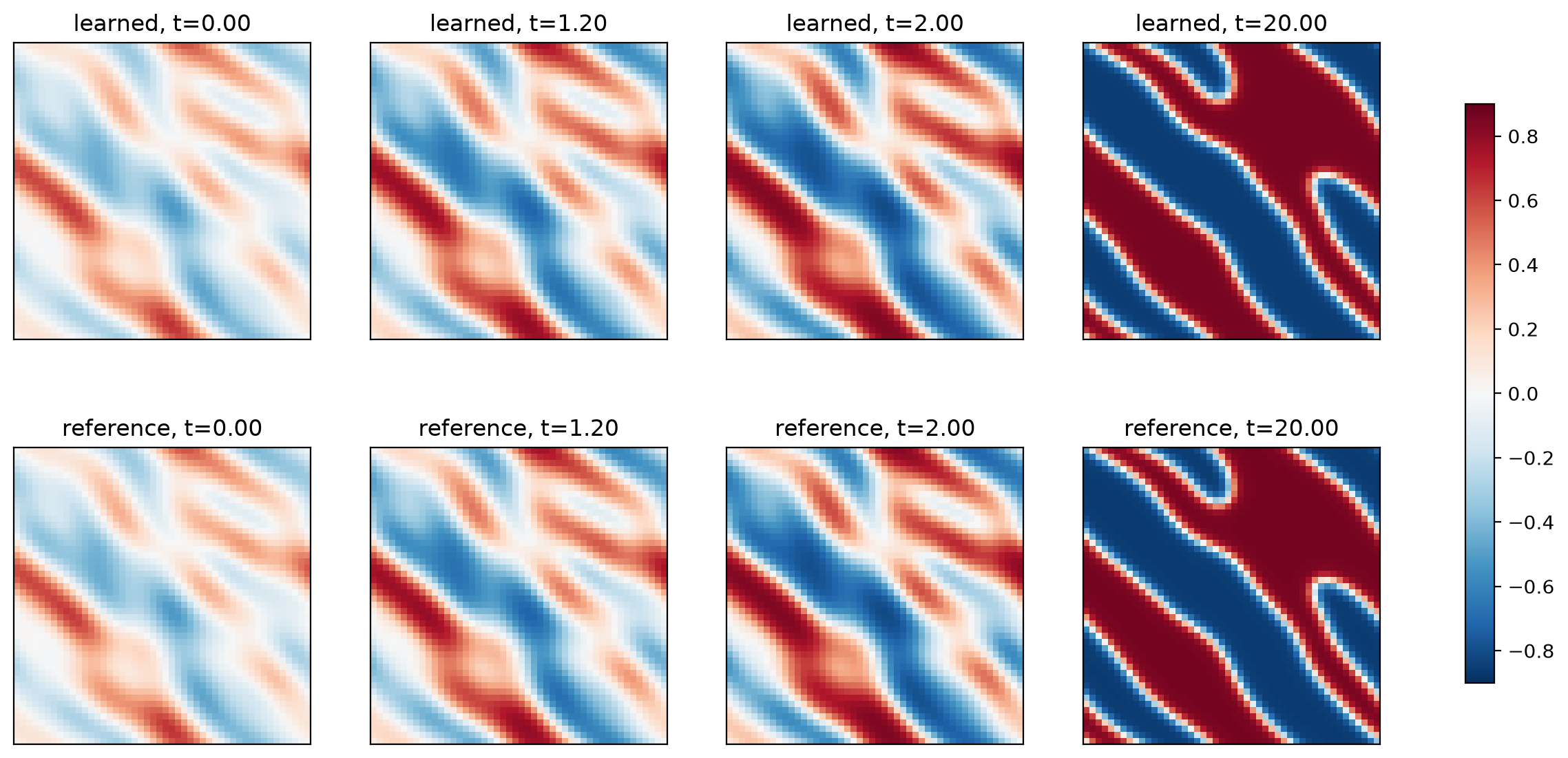}
  \caption{Flory--Huggins phase-field evolution.  Top: learned-potential ETD1.
  Bottom: the fine-step exact-potential reference.  The learned solution
  remains inside the strict separation interval and reproduces the reference
  phase morphology up to $T=20$.}
  \label{fig:flory-huggins-phase-fields}
\end{figure}

\clearpage

\subsection{Summary of the observed advantages}
\label{subsec:observed-advantages}

The numerical evidence supports the following conclusions within the tested
periodic phase-field setting.
\begin{enumerate}
  \item \emph{Low-dimensional mechanism identification.}
  The method learns a scalar potential rather than a high-dimensional
  space--time solution or a full state-update operator.  This yields accurate
  force recovery from short state trajectories and an interpretable learned
  energy.

  \item \emph{Separation of known and unknown physics.}
  The known stiff diffusion operator is retained in the ETD solver.  The
  neural network is used only for the missing constitutive term, which avoids
  spending model capacity on relearning the known spatial dynamics.

  \item \emph{Long-time and cross-configuration reuse.}
  A potential trained up to $T=1.2$ remains accurate and stable as far as
  $T=50$.  The same weights transfer to unseen initial conditions, grids from
  $32^2$ to $96^2$, diffusion parameters from $0.05$ to $0.2$, and time steps
  up to four times the training step.

  \item \emph{Structure preservation without an auxiliary energy.}
  The curvature-informed deployments satisfy the tested MBP and original
  learned-energy dissipation diagnostics.  No SAV/GSAV variable or modified
  auxiliary energy is introduced.

  \item \emph{Reproducibility and data scaling.}
  The five-seed results show a consistent reduction in both force and rollout
  errors as the number of state pairs increases.  The method is insensitive
  to $0.1\%$ relative noise and degrades gradually at higher noise levels.

  \item \emph{Advantage over matched-size black-box learning.}
  In the present benchmark, the potential representation is more accurate
  than a parameter-matched direct-force network and substantially more robust
  than a residual FNO baseline. This supports the use of a structure-aware gray-box
  representation when the differential operator is known.
\end{enumerate}

These experiments establish numerical feasibility and identify the regime in
which the method is most compelling: the differential operator is known, the
local potential is unknown, short-time state observations are available, and
many long-time or multi-initial-condition predictions are required.  They do
not by themselves prove superiority over every PINN or neural-operator
variant, nor do they establish a computational advantage for a single forward
solve when the complete potential is already known.  Such claims require
matched wall-clock, memory, and accuracy comparisons and are left for future
work.

\section{Conclusions and extensions}
We have developed a gradient-flow framework based on a trajectory-inferred
free energy, in which the local free-energy density is identified directly
from observed state transitions while the known structure of the linear
spatial operator is retained. In addition to the fixed-operator setting, we
formulated joint identification of a positive scalar Laplacian coefficient
$a_\alpha$, representing $\varepsilon^2$, and the local energy $F_\theta$.
The central construction is constitutive-first rather than flow-map-first:
the data identify $(a_\alpha,F_\theta)$, differentiation produces the
variational force, and the coefficient-dependent ETD response then converts
the inferred energy into a recurrent state-update map.  This differs from
directly learning the full map
$U^n\mapsto U^{n+1}$ with a neural operator.
The inferred curvature determines the stabilization in the resulting
\TIFE-based ETD1 map. Under the stated semigroup, endpoint, and upper-curvature
curvature assumptions, the method preserves the admissible maximum bound and
unconditionally dissipates the original discrete \TIFE{} functional, without
introducing an SAV/GSAV auxiliary variable or a modified energy.  The error
analysis separates temporal, spatial, and force-inference effects and yields
the bound \(O(\tau+h^2+\delta_F)\) for a fixed operator and
\(O(\tau+h^2+\delta_F+\delta_a)\) for the inferred coefficient, where
$\delta_a=|a_\alpha-a_\star|$. In addition, the
response-to-force estimate bounds $\delta_F$ on the data-supported
interval by the achieved ETD response residual, observation perturbations,
the finite-step freezing defect, and the state fill distance, with explicit
dependence on response-inversion conditioning.  Under the stated joint
identifiability condition, the coefficient-dependent response residual also
controls $\delta_a$ and the empirical force error.  This closes a conditional
data-to-rollout error chain rather than treating the inferred constitutive
quantities as unrelated external errors.

The numerical results complement the analysis.  State-only training recovers
both quartic and logarithmic potentials, produces accurate long-time rollouts
well beyond the observation window, and transfers without retraining across
initial conditions, grids, diffusion parameters, and time steps.  The
curvature-informed deployments satisfy the tested maximum-bound and
learned-energy diagnostics, while the five-seed studies quantify data scaling
and robustness to observation noise.  The matched-size comparisons also show
the benefit, for the present gray-box problem, of learning a scalar
constitutive energy rather than a full update operator.
The reported numerical tables use a supplied Laplacian coefficient and hence
do not yet constitute an empirical validation of joint coefficient recovery.

Natural extensions include second-order \TIFE-based ETD schemes, certified
curvature bounds for neural representations, vector-valued order parameters
with Hessian-based stabilization, jointly inferred multi-parameter elliptic
operator families, and
nonlocal or mobility-dependent free energies.

\bigskip
\textbf{Acknowledgements.} G. Ji is partially supported by the National Natural
Science Foundation of China (Grant No. 12471363).

\nocite{ball2011orientability}
\bibliographystyle{siam}
\bibliography{S0362546X14002934}

\end{document}